\newtheorem{theorem}{Theorem}
\newtheorem{lemma}{Lemma}
\newdefinition{remark}{Remark}
\newproof{proof}{Proof}
\newproof{pot}{Proof of Theorem \ref{thm2}}
\newtheorem{proposition}{Proposition}
\newtheorem{definition}{Definition}
\newtheorem{example}{Example}
\newcommand{\h}{\hspace*{0.5 cm}}
\newcommand{\N}{\mathbb{N}}
\newcommand{\Rset}{\ensuremath{\mathbb{R}}}
\newcommand{\Nset}{\ensuremath{\mathbb{N}}}
\journal{Journal of \LaTeX\ Templates}
\begin{document}
	
	\begin{frontmatter}
		
		\title{Schmidt Representation of Bilinear Operators on Hilbert Spaces}

		%% or include affiliations in footnotes:
		\author[rvt]{Eduardo Brandani da Silva\corref{cor1}}
		\ead{ebsilva@uem.br}
		\author[els]{Dicesar Lass Fernandez}
		\ead{dicesar@ime.unicamp.br}
		\author[els1]{Marcus Vinícius de Andrade Neves}
		\ead{marcusmatematico@hotmail.com}
		\cortext[cor1]{Corresponding author}
		%\cortext[cor2]{Principal corresponding author}
		
		\address[rvt]{Universidade Estadual de Maring\'a, Av. Colombo, 5790 - Zona 7, Maring\'a, Paran\'a Brazil}
		\address[els]{Imecc - Unicamp, Rua Sérgio Buarque de Holanda 651,  13083-859 -  Campinas - SP, Brazil}
		\address[els1]{Department of Mathematics, Federal University of Mato Grosso -
			Av. dos Estudantes 5055, 78735-901 Rondon\'opolis, MT, Brazil}

		\begin{abstract}
			 Current work defines Schmidt representation of a bilinear operator $T: H_1 \times H_2 \rightarrow K$, where $H_1, H_2$ and $K$ are separable Hilbert spaces. Introducing the concept of singular value and ordered singular value, we prove that if $T$ is compact, and its singular values are ordered, then $T$ has a Schmidt representation on real Hilbert spaces. We prove that the hypothesis of existence of ordered singular values is fundamental.
		\end{abstract}
		
		\begin{keyword}
			singular value \sep bilinear operator \sep Schmidt representation \sep compact operator \sep Hilbert space
			
			\MSC[2010] 47J10 \sep 47H60 \sep 46G25
		\end{keyword}
		
	\end{frontmatter}
	
	%\linenumbers

\section{Introduction}

Compact linear operators have a fundamental role in functional analysis and operator theory, and also have important applications, in particular in the study of boundary-value problems for elliptic differential equations. 

Bilinear operators are a fundamental research subject in harmonic analysis and functional analysis. Research on bilinear operators in the Hilbert spaces has also attracted the attention of researchers by a long time, see \cite{murr}. The study of Hilbert-Schmidt bilinear operators is also an important topic, see \cite{caran}, \cite{hern}, \cite{MM}. Also, in \cite{erdog}, convolutions of bilinear operators acting on a product of Hilbert spaces have been studied. Compactness of bilinear operators is also a very important research topic, and it is studied in \cite{AR}. Examples are also given in \cite{bern}.  

A fundamental result about linear operators on Hilbert spaces is the spectral theorem, which says that for a compact self-adjoint operator $L$ acting on a separable Hilbert space $H$, then there exists a system of orthonormal eigenvectors $x_1,x_2,x_3,\cdots$ of $L$ and corresponding eigenvalues $\lambda_1,\lambda_2,\lambda_3,\cdots$ such that
\[
L(x)=\displaystyle\sum^\infty_{n=1}
\lambda_n \langle x,x_n\rangle x_n \,,
\]
for all $x\in H$. The sequence $\{\lambda_n\}$ is decreasing and, if it is infinite, converges to $0$. The series on the right hand side converges in the operator norm of $\mathfrak{L}(H)$. In this case, this representation is called Schur Representation of $L$, see \cite[pag. 2]{JRR} for more details. Later, Erhard Schmidt showed that if $H_1$ and $H_2$ are Hilbert spaces and $L \in \mathfrak{L}(H_1,H_2)$ is a compact linear operator, then 
\[
L(x)=\displaystyle\sum^\infty_{n=1}
\lambda_n \langle x,x_n\rangle y_n \,,
\]
where the $\lambda_n$ are now the eigenvalues of the positive compact self-adjoint operator $S = (L^* L)^{1/2}$ acting on $H_1$, $S x_n = \lambda_n y_n$ and $y_n = \lambda_n^{-1} L x_n$, ($\lambda_n \neq 0$). The operator $S$ is called the absolute value of $L$ and denoted by $|L|$; its eigenvalues $\lambda_n$ are called the singular values of $L$.

Few works on the representation of bilinear and multilinear operators exist in the literature, among which we can mention \cite{amson1, amson2, EDM}. In \cite{EDM}, Shur representation of a compact bilinear operator $T : H \times H \to H$ was studied. Our main goal in current work is to introduce a Schmidt representation for bilinear operators. To attain it, we will define new concepts and prove some new results, showing similarities and differences with the linear case. For a compact bilinear operator $T : H_1 \times H_2 \to K$, we introduce the Schmidt representation
\[
T(x,y)=\sum^\infty_{n=1} \tau_n \langle x,x_n \rangle \langle y, y_n\rangle z_n \,,
\]
for all $(x,y) \in H_1 \times H_2$. We prove that if $T$ has such representation, then it is compact, and the $\tau_n$ are ordered singular values of $T$. On other hand, we will prove that if $T$ satisfy these conditions, then it has such representation.

The work is divided in the following sections. Making an analogy with the linear case, Sections $2$ introduces the notations and shows the main properties of bilinear operators. Section $3$ recall the Schmidt representation in the linear case. In Section $4$ we define singular value of a bilinear operator $T \in Bil(H_1 \times H_2,K)$. We prove that if $T$ is compact and nonzero, then $\|T\|$ is a singular value. In Section $5$ we define the Schmidt Representation for a bilinear operator $T : H_1 \times H_2 \to K$. We call the attention that even if a bilinear operator is compact, it may not have a Schmidt representation, giving a striking difference with the linear case. To get around this situation, we introduce the concept of ordered singular value of $T$, and we prove that if $T$ is compact, nonzero and all of its singular values are ordered, then $T$ has a Schmidt representation. In Section $6$ we study the connections between the Schmidt representation and the Schur representation of \cite{EDM}. Finally, in Section $7$, we prove that, if $T$ has a Schmidt representation, then it is a Hilbert-Schmidt operator. In all sections we give several examples and counter-examples of bilinear operators satisfying several conditions discussed in the text. Finally, we observe that our results will provide several examples of compact operators on real Hilbert spaces.

\section{Compact bilinear operators}

\h The standard notation from the Banach space
theory is used throughout the paper. Let $X$ and $Y$ be Banach spaces. We shall consider $X\times Y$
equiped with the norm
\[
||(x,y)||_{_{_{\infty}}} = \max
\{||x||_{_{X}},||y||_{_{Y}}\} .
\]
If we denote by $U_X$ the closed unity ball of a Banach space $X$,
we see that $U_{X\times Y} = \{(x,y) \in X \times Y ; \|(x,y)\|_{_{_{\infty}}} \leq 1\} = U_X\times U_Y$.

If $Z$ is a Banach space, we denote by $Bil(X \times Y,Z)$ the linear space of all bilinear operators $T$, from $X\times Y$ into $Z$, such that
\[
\|T\|=\|T\|_{Bil(X\times Y,Z)} = \sup \{\, ||T(x,y)||_Z;\;
\|(x,y)\|_\infty \leq 1\;\} < +\infty .
\]
Equiped with this norm, the linear space $Bil(X\times Y,Z)$ is a
Banach space. In particular, $Bil(X\times Y) = Bil(X\times Y, \Rset)$ is the
Banach space of all bounded bilinear forms. If $X = Y = Z$ we use only $Bil(X)$.

\begin{proposition} Let $T\in Bil(X\times Y,Z)$. Then,
	\[
	\|T\| = \displaystyle \sup_{\|(x,y)\|_{_{_{\infty}}} \leq 1 }
	\|T(x,y)\|_{_{\scaleto{Z}{5pt}}}= \sup_{\|x\|_{_{\scaleto{X}{5pt}}}
		= 1,\|y\|_{_{\scaleto{Y}{5pt}}} = 1} \|T(x,y)\|_{_{\scaleto{Z}{5pt}}} \,.
	\]
\end{proposition}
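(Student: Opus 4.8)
The first equality is immediate from the definition of $\|T\|$ together with the observation made just above the statement that $U_{X\times Y} = U_X\times U_Y$, so that the constraint $\|(x,y)\|_{_{_{\infty}}} \leq 1$ is exactly the conjunction $\|x\|_X \leq 1$ and $\|y\|_Y \leq 1$. The real content of the proposition therefore lies in the second equality, which relates the supremum over the closed unit ball to the supremum over the product of the two unit spheres.

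Write $A$ for the middle supremum and $B$ for the right-hand supremum. The inequality $B \leq A$ is trivial, since any pair $(x,y)$ with $\|x\|_X = 1$ and $\|y\|_Y = 1$ satisfies $\|(x,y)\|_{_{_{\infty}}} = \max\{1,1\} = 1 \leq 1$, and hence lies in the set over which $A$ is taken. So every quantity contributing to $B$ already contributes to $A$.

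For the reverse inequality $A \leq B$, I would fix an arbitrary pair $(x,y)$ with $\|x\|_X \leq 1$ and $\|y\|_Y \leq 1$ and bound $\|T(x,y)\|_Z$ by $B$. If either $x = 0$ or $y = 0$, then bilinearity forces $T(x,y) = 0$ and there is nothing to prove. Otherwise, set $\widehat{x} = x/\|x\|_X$ and $\widehat{y} = y/\|y\|_Y$, which are unit vectors in $X$ and $Y$ respectively; using bilinearity and the separate positive homogeneity of $T$ in each slot,
\[
T(x,y) = \|x\|_X \, \|y\|_Y \, T(\widehat{x},\widehat{y}),
\]
so that $\|T(x,y)\|_Z = \|x\|_X \|y\|_Y \|T(\widehat{x},\widehat{y})\|_Z \leq \|T(\widehat{x},\widehat{y})\|_Z \leq B$, where the first inequality uses $\|x\|_X \leq 1$ and $\|y\|_Y \leq 1$. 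Taking the supremum over all admissible $(x,y)$ yields $A \leq B$, and combining this with the previous paragraph gives $A = B$.

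There is no genuine obstacle in this argument: the whole proof rests on the separate homogeneity of a bilinear map in each variable, which lets one rescale an arbitrary pair from the unit ball to a pair of unit vectors at the cost of a scalar factor $\|x\|_X\|y\|_Y \leq 1$ that can only decrease the norm. The only point deserving a moment's care is the degenerate case $x = 0$ or $y = 0$, where the normalization $\widehat{x}$ or $\widehat{y}$ is undefined but the value $T(x,y)$ vanishes outright, so the estimate holds vacuously.
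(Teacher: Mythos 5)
Your proof is correct: the paper states this proposition without proof, and your argument — reducing the first equality to the identity $U_{X\times Y}=U_X\times U_Y$ and obtaining the second by normalizing $\widehat{x}=x/\|x\|_X$, $\widehat{y}=y/\|y\|_Y$ and using the separate homogeneity $T(x,y)=\|x\|_X\|y\|_Y\,T(\widehat{x},\widehat{y})$, with the degenerate case $x=0$ or $y=0$ handled by bilinearity — is exactly the standard argument the authors evidently intend. Nothing to add.
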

In what follows, let $B_{r,X}=\{x\in X : \|x\|_{_{\scaleto{X}{5pt}}} \leq  r \}$ be the closed ball of radius $r>0$ with center in the origin of a normed space $X$.\\

Given an operator  $T \in Bil(X \times Y,Z)$, in general, the set $T(X \times Y)$ is not a subspace of $Z$. It is called image of $T$. The rank of $T$ is defined by
\[
{\rm rank}(T) = {\rm dim}_Z([T(X \times Y)]) \,,
\]
where $[ S ]$ denotes the subspace generated by a set $S \subset X$.

\begin{definition}
	Let $ X $, $ Y $ and $ Z $ be normed vector spaces. A bilinear operator $ T: X \times Y \longrightarrow Z $ is called a compact bilinear operator if $ T (U_X \times U_Y) $ is pre-compact in $ Z $, that is, the closure $ \overline{T (U_X \times U_Y)} $ is compact in $Z$.
\end{definition}

\section{Singular Values and Schmidt Representation in the Linear Case}

We recall basic definitions on Schmidt representation for the linear case, see \cite{Pie} for more details. Let $H$ and $K$ be real Hilbert spaces and $T : H \rightarrow K$ a bounded linear operador. Folowing \cite{Pie} we define:

\begin{definition}
	A positive number $\tau$ is a singular value of operador $T:H\rightarrow K$ if there exist unitary vectors $x \in H$ and $y\in K$ such that 
	\[
	T(x)=\tau y  \, \, \,\mbox{and} \, \, \,  T^*(y)=\tau	x \,,
	\]
	where $T^*:K\rightarrow H$ is the adjoint operador of $T$.
\end{definition}

We have:
\[
\begin{array}{rcl}
T^*(T(x))&=& T^*(\tau y)=\tau (T^*(y))=\tau^2 x \nonumber \\
T(T^*(y))&=& T(\tau x)=\tau (T(x))=\tau^2 y\, .\\
\end{array}
\]

Thus, $\tau ^2$ is eigenvalue of $(T^*T)$ and $(T T^*)$. We observe that $T$ and $T^*$ have the same singular values. We say that $(x_i)$ is a extended orthonormal sequence if $\langle x_i,x_j\rangle =0$ if $i\neq j$ and each $x_i$ is null or unitary, for all $i,j \in \Nset$. Then, if $H$ and $K$ are separable Hibert spaces and $T : H \rightarrow K$ is a bilinear operator, $T$ has a Schmidt representation if there exists a sequence $(\tau_n) \in c_0$ and extended orthonormal sequences $(x_n)$ in $H$ and $(y_n)$ in $K$, such that
\[
T(x)=\sum^\infty_{n=1}\tau_n<x,x_n> y_n \,,
\]
for all $x, y \in H$.

A Schmidt representation is monotone if $\tau_1 \geq \tau_2 \geq \ldots \geq 0$. We have the following result from Pietsch \cite{Pie}. 

\begin{theorem} Every compact operator $T : H \rightarrow K$ has a monotone Schmidt representation.
\end{theorem}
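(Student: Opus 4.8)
The plan is to reduce the statement to the spectral theorem for compact self-adjoint operators via the absolute value construction already sketched in the Introduction. First I would form the operator $S = (T^*T)^{1/2}$. Since $T$ is compact, so is $T^*$, and hence $T^*T$ is a compact, self-adjoint, positive operator on $H$ (positivity follows from $\langle T^*Tx, x\rangle = \|Tx\|^2 \geq 0$), so its positive square root $S$ is again compact, self-adjoint and positive. Applying the spectral theorem to $S$ yields an orthonormal sequence of eigenvectors $(x_n)$ in $H$ with eigenvalues $\tau_1 \geq \tau_2 \geq \cdots \geq 0$ which, if infinite, decrease to $0$; these $\tau_n$ are by definition the singular values of $T$, and the ordering already delivers the required monotonicity.

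The second step is to produce the companion sequence $(y_n)$ in $K$ and verify it is orthonormal. From $S^2 = T^*T$ and $Sx_n = \tau_n x_n$ one gets $\|Tx_n\|^2 = \langle T^*Tx_n, x_n\rangle = \tau_n^2$, hence $\|Tx_n\| = \tau_n$. For each index with $\tau_n \neq 0$ I would set $y_n = \tau_n^{-1}Tx_n$, and for $\tau_n = 0$ put $y_n = 0$, so that $(y_n)$ is an extended orthonormal sequence. The orthonormality is a direct computation: for the nonzero indices,
\[
\langle y_m, y_n\rangle = (\tau_m\tau_n)^{-1}\langle Tx_m, Tx_n\rangle = (\tau_m\tau_n)^{-1}\langle T^*Tx_m, x_n\rangle = (\tau_m\tau_n)^{-1}\tau_m^2\,\langle x_m, x_n\rangle = \delta_{mn} \,.
\]
One also checks that $T^*y_n = \tau_n x_n$, confirming that $\tau_n$, $x_n$, $y_n$ satisfy the defining relations of a singular value.

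Next I would establish the representation itself. The key observation is that $\ker T = \ker T^*T = \ker S$, since $Tx = 0$ is equivalent to $\|Tx\|^2 = \langle T^*Tx, x\rangle = 0$, and the eigenvectors $(x_n)$ associated with nonzero eigenvalues form an orthonormal basis of $(\ker S)^{\perp}$. Decomposing an arbitrary $x \in H$ as $x = x_0 + \sum_n \langle x, x_n\rangle x_n$ with $x_0 \in \ker T$, and using $Tx_0 = 0$ together with $Tx_n = \tau_n y_n$, gives
\[
T(x) = \sum_{n=1}^{\infty} \langle x, x_n\rangle\, Tx_n = \sum_{n=1}^{\infty} \tau_n \langle x, x_n\rangle\, y_n \,,
\]
which is the desired Schmidt representation.

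Finally I would verify convergence in the operator norm, which is where compactness is genuinely used. Writing $T_N$ for the $N$-th partial sum, orthonormality of $(y_n)$ together with Bessel's inequality yields
\[
\|(T - T_N)x\|^2 = \sum_{n > N} \tau_n^2\, |\langle x, x_n\rangle|^2 \leq \tau_{N+1}^2 \sum_{n>N} |\langle x, x_n\rangle|^2 \leq \tau_{N+1}^2\, \|x\|^2 \,,
\]
so that $\|T - T_N\| \leq \tau_{N+1} \to 0$. I do not expect any single step to be a serious obstacle, since the whole argument is a faithful transcription of the spectral theorem; the one point requiring care is the bookkeeping of the kernel in the third step, ensuring that the eigenvectors with nonzero eigenvalues together with a basis of $\ker T$ exhaust $H$, so that no component of $x$ is lost in the expansion.
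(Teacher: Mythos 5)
Your proof is correct and is the standard argument; the paper itself states this theorem without proof, citing Pietsch, and your construction via $S=(T^*T)^{1/2}$ is exactly the one sketched in the paper's Introduction (eigenvectors $x_n$ of $S$, $y_n=\tau_n^{-1}Tx_n$, singular values as eigenvalues of $S$). All the steps — orthonormality of $(y_n)$, the kernel bookkeeping, and the operator-norm convergence bound $\|T-T_N\|\leq\tau_{N+1}$ — check out.
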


\section{Singular Value of a Bilinear Operator}

In this section we define singular value of a bilinear operator and we show that every compact nonzero bilinear operator has a singular value. 

\begin{definition}\label{fih1} Let $H_1$, $H_2$ and $K$ be real separable Hilbert spaces. Given $T \in Bil(H_1 \times H_2,K)$, we define the operators  
	\[
	\begin{array}{rcl}
	\phi_{H_1} &:& H_2 \rightarrow \mathfrak{L}(H_1,K) \nonumber \\
	\phi_{H_2} &:& H_1 \rightarrow \mathfrak{L}(H_2,K) \nonumber\\
	\end{array}
	\]
	where, given $x \in H_1$, $y \in H_2$, we have
	\[
	\begin{array}{rcl}
	\phi_{H_1}(y)(x) &=& T(x,y) \nonumber\\
	\phi_{H_2}(x)(y) &=& T(x,y)\nonumber.\\
	\end{array}\]
\end{definition}

\begin{definition} A positive number $\tau$ is a singular value of $T \in Bil(H_1 \times H_2,K)$ if, there exist normalized vectors $x_0 \in H_1$, $y_0\in H_2$ and $z_0 \in K$, such that
	\[
	\begin{array}{rcl}
	T(x_0,y_0) & = &\tau z_0 \\
	\phi^*_{H_1}(y_0)(z_0) & =& \tau x_0 \\
	\phi^*_{H_2}(x_0)(z_0) & = &\tau y_0 
	\end{array}
	\]
where $\phi_{H_1}(y_0) :H_1 \rightarrow K$ and $\phi_{H_2}(x_0) :H_2 \rightarrow
	K$ are given in Definition \ref{fih1}, and $\phi^*_{H_1}(y_0) : K \rightarrow H_1$, $\phi^*_{H_2}(x_0) :K \rightarrow H_2$ are the adjoint operators of $\phi_{H_1}(y_0)$ and $\phi_{H_2}(x_0)$, respectively.
	
	In this case we say that $(x_0 \, , y_0 \, , z_0)$ is a triple of singular vectors associated to the singular value $\tau$.
\end{definition}

\begin{remark}
	In Examples \ref{exschmidt01} and \ref{exschmidt02} next we will see that we can have more than one triple $(x_0 \,, y_0 \,, z_0)$ of singular vectors associated with the same singular value $\tau$.
\end{remark}

\begin{remark} We can see that 
	\begin{align*}
	\langle T(x_0,y_0),z_0\rangle_K = \langle \phi_{H_1}(y_0)(x_0),z_0\rangle_K =
	\langle x_0,\phi^*_{H_1}(y_0)(z_0)\rangle_{H_1} = \langle x_0,\tau x_0\rangle_{H_1} = \tau \\
	\langle T(x_0,y_0),z_0\rangle_K = \langle \phi_{H_2}(x_0)(y_0),z_0\rangle_K =
	\langle y_0,\phi^*_{H_2}(x_0)(z_0)\rangle_{H_2} = \langle y_0,\tau y_0\rangle_{H_2} = \tau\,
	\vspace{-0.5cm}.
	\end{align*}
Thus, we have that $\tau^2$  is eigenvalue of:
	\begin{align}
	\phi_{H_1}(y_0) \circ \phi^*_{H_1}(y_0)&: K \rightarrow K & \phi_{H_2}(x_0) \circ
	\phi^*_{H_2}(x_0)&: K \rightarrow K \nonumber\\
	\phi^*_{H_1}(y_0) \circ \phi_{H_1}(y_0)&: H_1 \rightarrow H_1 & \phi^*_{H_2}(x_0) \circ
	\phi_{H_2}(x_0)&: H_2 \rightarrow H_2 \vspace{-1cm}\label{sing02}\,.
	\end{align}
Indeed, just note that
	\begin{equation}\phi_{H_1}(y_0) \circ
	\phi^*_{H_1}(y_0)(z_0)=\phi_{H_1}(y_0)(\tau x_0)=T(\tau
	x_0,y_0)=\tau T(x_0,y_0)=\tau^2 z_0 \, \vspace{-0.3cm}.\nonumber
	\end{equation}
Similar reasoning applies to the other linear operators in \eqref{sing02}.
\end{remark}

Given a bilinear operator $T \in Bil(H_1 \times H_2,K)$,  will it have a singular value? As in the linear case, we will show in Theorem \ref{sing03} that if the bilinear operator is compact, then we guarantee the existence of at least one singular value.

\begin{lemma}\label{fih1compacto}
	Let $T\in Bil(H_1 \times H_2, K)$. If $T$ is compact, then the operators $\displaystyle\phi_{H_1}$ and $\phi_{H_2}$ given in the Definition \ref{fih1} are linear and compact operators.
\end{lemma}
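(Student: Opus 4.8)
The plan is to establish the two asserted properties—linearity and compactness—separately, treating $\phi_{H_1}$ in detail since the argument for $\phi_{H_2}$ is symmetric, with the roles of $H_1$ and $H_2$ interchanged. Linearity and boundedness come directly from the bilinearity of $T$. For fixed $x\in H_1$ the map $y\mapsto T(x,y)$ is linear, so for $y,y'\in H_2$ and scalars $\alpha,\beta$ one has $\phi_{H_1}(\alpha y+\beta y')(x)=T(x,\alpha y+\beta y')=\alpha\,\phi_{H_1}(y)(x)+\beta\,\phi_{H_1}(y')(x)$ for every $x$, whence $\phi_{H_1}$ is linear as a map $H_2\to\mathfrak{L}(H_1,K)$. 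Moreover $\|\phi_{H_1}(y)\|_{\mathfrak{L}(H_1,K)}=\sup_{\|x\|\le1}\|T(x,y)\|_K\le\|T\|\,\|y\|_{H_2}$, so $\phi_{H_1}$ is bounded with $\|\phi_{H_1}\|\le\|T\|$; I record the symmetric bound $\|\phi_{H_2}\|\le\|T\|$ as well.

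For compactness I would exploit that $H_2$ is reflexive, being a Hilbert space, so it suffices to show $\phi_{H_1}$ is completely continuous, that is, that $y_n\rightharpoonup y$ in $H_2$ forces $\|\phi_{H_1}(y_n)-\phi_{H_1}(y)\|_{\mathfrak{L}(H_1,K)}\to0$. Combined with weak sequential compactness of bounded subsets of $H_2$, this yields that $\phi_{H_1}$ sends bounded sequences to norm-convergent subsequences, which is exactly compactness. Setting $w_n=y_n-y$, so that $w_n\rightharpoonup0$ and $(w_n)$ is bounded, the task reduces to $\sup_{\|x\|\le1}\|T(x,w_n)\|_K\to0$. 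I would argue by contradiction: if this fails, there are $\varepsilon>0$, a subsequence, and unit vectors $x_n\in H_1$ with $\|T(x_n,w_n)\|_K\ge\varepsilon$. Since $(x_n,w_n)$ is bounded and $T$ is compact, a further subsequence gives $T(x_n,w_n)\to z$ in $K$ with $\|z\|_K\ge\varepsilon$, and reflexivity lets me also assume $x_n\rightharpoonup x_*$ in $H_1$. The goal is then the contradiction $z=0$, which I would try to reach by testing against an arbitrary $\zeta\in K$ through the identity $\langle T(x_n,w_n),\zeta\rangle_K=\langle w_n,\phi_{H_2}(x_n)^*(\zeta)\rangle_{H_2}$ together with $w_n\rightharpoonup0$.

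The main obstacle is precisely this last step. Because $w_n$ tends to $0$ only weakly while the second factor $\phi_{H_2}(x_n)^*(\zeta)$ also varies with $n$, the pairing need not vanish unless $\phi_{H_2}(x_n)^*(\zeta)$ converges \emph{strongly} in $H_2$; a bounded bilinear form is in general not jointly weakly sequentially continuous, so the weak convergence $x_n\rightharpoonup x_*$ by itself does not suffice. The crux is therefore to show that the partial adjoint map $x\mapsto\phi_{H_2}(x)^*(\zeta)$ carries weakly convergent sequences to norm-convergent ones, and this is exactly the point at which the joint pre-compactness of $T(U_{H_1}\times U_{H_2})$ in $K$ must be converted into a genuine compactness property of the linearizations rather than assumed. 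I expect this weak-to-strong passage to be where the entire argument for $\phi_{H_1}$ being itself a compact operator stands or falls, and it is the step I would scrutinize most carefully, since it is here that compactness of $T$ must do work that goes beyond the pointwise compactness of each $\phi_{H_1}(y)$.
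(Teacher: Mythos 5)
Your linearity and boundedness arguments are correct and coincide with the paper's (which treats $\phi_{H_1}$ and remarks that $\phi_{H_2}$ is symmetric). The compactness half, however, stalls exactly where you said it would, and the obstacle you isolated is not a technical inconvenience but a genuine impossibility: under the paper's definition of a compact bilinear operator (pre-compactness of $T(U_{H_1}\times U_{H_2})$ in $K$), the lemma is \emph{false}, so no argument can close your gap. Take $H_1=H_2=H$ infinite-dimensional, fix a unit vector $z_0\in K$, and set $T(x,y)=\langle x,y\rangle z_0$. Then $T(U_H\times U_H)=\{t z_0 : |t|\le 1\}$ is compact, so $T$ is a compact bilinear operator (and each $\phi_{H_1}(y)=\langle\cdot,y\rangle z_0$ is rank one, hence compact, consistent with the separate compactness you and the paper both establish). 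Yet $\|\phi_{H_1}(y)-\phi_{H_1}(y')\|_{\mathfrak{L}(H,K)}=\|y-y'\|_{H}$, so $\phi_{H_1}$ is an isometric embedding and $(\phi_{H_1}(e_n))$ for an orthonormal sequence $(e_n)$ has no norm-convergent subsequence. In your notation: $w_n=e_n\rightharpoonup 0$ but $\sup_{\|x\|\le1}\|T(x,w_n)\|_K\equiv 1$, and the partial adjoint $x\mapsto\phi_{H_2}(x)^*(\zeta)=\langle z_0,\zeta\rangle\,x$ manifestly does not convert weak convergence into norm convergence. So the weak-to-strong passage you flagged as the crux is precisely the step that cannot be supplied from Definition 2 alone.

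It is worth comparing this with the paper's own proof, which commits the error at the same juncture but less visibly: after extracting, for a \emph{single fixed} unit vector $x$, a subsequence along which $T(x,y_{n_{k_s}})$ converges, it concludes that $\sup_{\|x\|_{H_1}=1}\|\phi_{H_1}(y_{n_{k_i}})(x)-\phi_{H_1}(y_{n_{k_j}})(x)\|_K\to 0$ — an unjustified upgrade of pointwise convergence at one $x$ (along an $x$-dependent subsequence) to operator-norm convergence. In the counterexample above, strong-operator convergence indeed holds ($\phi_{H_1}(e_n)(x)=\langle x,e_n\rangle z_0\to 0$ for every $x$) while norm convergence fails, which shows concretely why that step is invalid; since Theorem \ref{sing03} invokes Lemma \ref{fih1compacto}, the gap propagates. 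On the positive side, your scheme proves the lemma verbatim under the stronger hypothesis that $T$ is completely continuous, i.e., $x_n\rightharpoonup x$ and $y_n\rightharpoonup y$ imply $T(x_n,y_n)\to T(x,y)$ in norm: if $\sup_{\|x\|\le1}\|T(x,w_n)\|_K\ge\varepsilon$ with $w_n\rightharpoonup0$, choose unit vectors $x_n$ with $\|T(x_n,w_n)\|_K\ge\varepsilon/2$, pass to $x_n\rightharpoonup x_*$, and conclude $T(x_n,w_n)\to T(x_*,0)=0$, a contradiction — so your reduction via reflexivity is exactly the right strategy for the hypothesis the lemma actually needs, and your diagnosis of where the published argument stands or falls is correct.
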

\begin{proof} Let $y,y'\in H_2$ and $\alpha \in \Rset$. We have:
	\[
	\begin{array}{rcl}
	\phi_{H_1}(\alpha y + y')(x)&=&T(x,\alpha y + y')\\
	&=& \alpha T(x, y) + T(x,y')\\
	&=& (\alpha \phi_{H_1}(y) +\phi_{H_1}( y'))(x),\\
	\end{array}
	\]
	for all $x\in H_1$, then, $\phi_{H_1}(\alpha y + y')=\alpha
	\phi_{H_1}(y) +\phi_{H_1}( y')$. Thus, $\phi_{H_1}$ is linear. In a similar way we show that $\phi_{H_2}$ is also linear.
	
	If $T \in Bil(H_1 \times H_2, K)$ is compact, $T(x, \cdot):
	H_2 \rightarrow K$ and $T( \cdot,y): H_1 \rightarrow K$ are compact linear operators for all $x \in H_1$, $y\in H_2$ fixed. Indeed, suppose $\phi_{H_1}$ is not compact. So, there is a bounded sequence $(y_n) \subset H_2$, such that $(\phi_{H_1}(y_n))_{n \in \Nset}$ has no convergent subsequence, that is, if $(\phi_{ H_1}(y_{n_k}))_{k \in \Nset}$ is a subsequence of $ (\phi_{H_1}(y_n))_ {n \in \Nset}$, then there is $\epsilon > 0$ such that
	
	\begin{equation}\label{epsilon}
	\epsilon \leq \| \phi_{H_1}(y_{n_{i}}) -
	\phi_{H_1}(y_{n_{j}})\|_{_{\mathfrak{L}(H_1,K)}},
	\end{equation}
	for all $i\neq j \in \Nset$.
	
	Note that for each fixed $k \in \Nset$, $ \phi_{H_1}(y_{n_ {k}}): H_1 \rightarrow K$ is compact because
	\[
	\phi_{H_1}(y_{n_{k}})=T(\cdot, y_{n_{k}})\,.
	\]
	
	Let $x \in H_1$, tal que $\|x\|_{_{H_1}} = 1$. We have that $(y_{n_k})_{_{k\in \Nset}}$ is a bounded sequence, and since $T(x,\cdot): H_2 \rightarrow K$ is compact, then 	$(T(x,y_{n_k}))_{_{k \in \Nset}}$ has a convergent subsequence in $K$, that is, there exists a subsequence $(y_{n_{k_{s}}})_{_{s\in \Nset}} \subset (y_{n_k})_{_{k \in \Nset}}$ such that 
	\[T(x,y_{n_{k_{s}}}) \rightarrow z \,\, \mbox{com } \, \, s
	\rightarrow \infty,
	\]
	where $z \in K$ depends on $x \in H_1$.
	
	Note that $(\phi_{H_1}(y_{n_{k_{s}}}))_{s \in \Nset} \subset
	(\phi_{H_1}(y_{n_k}))_{k\in \Nset}$ and $\phi_{H_1}(y_{n_{k_{s}}})(x)=T(x,y_{n_{k_{s}}}) \rightarrow z$,
	when $ s \rightarrow \infty$. Then, we have
	\[
	\|\phi_{H_1}(y_{n_{k_{i}}})(x)-
	\phi_{H_1}(y_{n_{k_{j}}})(x)\|_{_{K}} \rightarrow 0\,,
	\]
	when $i,j \rightarrow \infty$. Thus,
	\[
	\|\phi_{H_1}(y_{n_{k_{i}}})-
	\phi_{H_1}(y_{n_{k_{j}}})\|_{_{\mathfrak{L}(H_1,K)}}=\sup_{\|x\|_{_{H_1}}=1}
	\|\phi_{H_1}(y_{n_{k_{i}}})(x)-
	\phi_{H_1}(y_{n_{k_{j}}})(x)\|_{_{K}} \rightarrow 0\,,
	\]
	when $i,j \rightarrow \infty$. From \eqref{epsilon}, we get a contradiction. Therefore, $\phi_{H_1}$ is compact. Similarly, $\phi_{H_2}$ is shown to be compact.
\end{proof}

With the Lemma \ref{fih1compacto} we can prove the following theorem.

\begin{theorem}\label{sing03}
	Let $T \in Bil(H_1 \times H_2,K)$ be a compact operator. If $T \neq 0$, then $\tau = \|T\|$ is a singular valor of $T$.
\end{theorem}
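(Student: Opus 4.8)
The plan is to realize $\tau=\|T\|$ as a genuine singular value by first showing that the supremum defining $\|T\|$ is \emph{attained} at a pair of unit vectors, and then extracting the two adjoint relations from the linear theory applied to the slice operators $\phi_{H_1}(y_0)$ and $\phi_{H_2}(x_0)$.

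First I would establish attainment. Using $\|T\|=\sup\{\|T(x,y)\|_K:\|x\|_{H_1}=\|y\|_{H_2}=1\}$, pick unit vectors $(x_n,y_n)$ with $\|T(x_n,y_n)\|_K\to\|T\|$. Since closed balls in a Hilbert space are weakly sequentially compact, pass to subsequences with $x_n\rightharpoonup x_0$ in $H_1$ and $y_n\rightharpoonup y_0$ in $H_2$. The crucial point is that $T(x_n,y_n)\to T(x_0,y_0)$ strongly in $K$, which I would obtain from the decomposition
\[
T(x_n,y_n)-T(x_0,y_0)=\big(\phi_{H_1}(y_n)-\phi_{H_1}(y_0)\big)(x_n)+\phi_{H_1}(y_0)(x_n-x_0),
\]
controlling the two pieces separately: by Lemma \ref{fih1compacto} the operator $\phi_{H_1}:H_2\to\mathfrak{L}(H_1,K)$ is compact, so $y_n\rightharpoonup y_0$ forces $\|\phi_{H_1}(y_n)-\phi_{H_1}(y_0)\|_{\mathfrak{L}(H_1,K)}\to 0$, which kills the first term since $\|x_n\|=1$; and $\phi_{H_1}(y_0)$ is itself a compact linear operator, so $x_n\rightharpoonup x_0$ gives $\phi_{H_1}(y_0)(x_n)\to\phi_{H_1}(y_0)(x_0)$, killing the second term. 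Hence $\|T(x_0,y_0)\|_K=\|T\|$. Finally, weak lower semicontinuity of the norm gives $\|x_0\|\le 1$ and $\|y_0\|\le 1$, while the bilinear bound $\|T(x_0,y_0)\|_K\le\|T\|\,\|x_0\|\,\|y_0\|$ together with $T\neq 0$ (so $\|T\|>0$) forces $\|x_0\|=\|y_0\|=1$; thus $x_0,y_0$ are unit vectors.

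Next, set $z_0=T(x_0,y_0)/\tau$, a unit vector satisfying the first required identity $T(x_0,y_0)=\tau z_0$. For the two adjoint identities I would pass to the slice operators. Put $A=\phi_{H_1}(y_0)\in\mathfrak{L}(H_1,K)$. The bilinear bound shows $\|A\|\le\|T\|$, and $\|Ax_0\|_K=\|T(x_0,y_0)\|_K=\|T\|$, so $\|A\|=\tau$ is attained at $x_0$. Now $A^{*}A$ is self-adjoint and positive, with $\|A^{*}A\|=\|A\|^2=\tau^2$ and $\langle A^{*}Ax_0,x_0\rangle=\|Ax_0\|^2=\tau^2$; the elementary computation $\|A^{*}Ax_0-\tau^2 x_0\|^2=\|A^{*}Ax_0\|^2-\tau^4\le 0$ then yields $A^{*}Ax_0=\tau^2 x_0$, whence $\phi^{*}_{H_1}(y_0)(z_0)=A^{*}(Ax_0/\tau)=\tau x_0$. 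Applying the identical argument to $B=\phi_{H_2}(x_0)$, which also has norm $\tau$ attained at $y_0$ with $By_0=\tau z_0$, gives $\phi^{*}_{H_2}(x_0)(z_0)=\tau y_0$. Thus $(x_0,y_0,z_0)$ is a triple of singular vectors associated with $\tau=\|T\|$.

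The main obstacle is the attainment step: unlike the linear case, a bilinear operator need not be jointly weakly continuous, so the argument genuinely uses compactness twice, once to upgrade the weak convergence $y_n\rightharpoonup y_0$ to norm convergence of the slices $\phi_{H_1}(y_n)$ (this is exactly where Lemma \ref{fih1compacto} enters), and once to pass $x_n\rightharpoonup x_0$ through the compact operator $\phi_{H_1}(y_0)$. Everything after attainment reduces to the standard Hilbert-space fact that the norm of a bounded operator, when attained, is a singular value.
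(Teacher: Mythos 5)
Your proof is correct, and it takes a genuinely different route from the paper's. The paper works entirely with norm convergence: it extracts a norm-convergent subsequence $z_{n_k}\to z$ from the compactness of $T$, a norm-convergent subsequence $\phi_{H_1}(y_{n_{k_s}})\to\phi_1$ from Lemma \ref{fih1compacto}, and then proves strong convergence of $x_{n_{k_s}}$ through a long quantitative estimate showing $\|\phi^*_{H_1}(y_{n_{k_s}})(z_{n_{k_s}})-\tau x_{n_{k_s}}\|_{H_1}^2\leq 2\tau^2-2\|T(x_{n_{k_s}},y_{n_{k_s}})\|_K^2\to 0$, after which it identifies the limits $\phi_1=\phi_{H_1}(y)$, $\phi_2=\phi_{H_2}(x)$ and reads off the three defining relations simultaneously. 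You instead split the argument into two independent modules: first, attainment of $\|T\|$ at a pair of unit vectors, obtained from weak sequential compactness of the balls plus the complete continuity of the compact operators $\phi_{H_1}$ (to upgrade $y_n\rightharpoonup y_0$ to norm convergence of the slices) and $\phi_{H_1}(y_0)$ (to pass $x_n\rightharpoonup x_0$ through); second, the standard linear fact that a bounded operator whose norm is attained has that norm as a singular value, applied separately to the two slices $\phi_{H_1}(y_0)$ and $\phi_{H_2}(x_0)$. Both proofs lean on Lemma \ref{fih1compacto} at the same critical spot, but your version buys modularity and replaces the paper's longest computation with the elementary identity $\|A^*Ax_0-\tau^2x_0\|^2=\|A^*Ax_0\|^2-\tau^4\leq 0$; it also makes explicit (via the bound $\|T(x_0,y_0)\|_K\leq\|T\|\,\|x_0\|\,\|y_0\|$) the step the paper dismisses as ``it is easy to show that $\|x\|_{H_1}=1$ and $\|y\|_{H_2}=1$''. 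The paper's version, in exchange, never needs to invoke weak convergence or the equivalence between compactness and complete continuity.
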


\begin{proof} We know that $\|T\| =\displaystyle \sup_{\|x\|_{_{H_1}}=1\, , \,\|y\|_{_{H_2}}=1} \|T(x,y)\|_{_{K}}$. Thus, there exists a sequence $\{(x_n,y_n)\}_{_{n\in\Nset}} \in H_1 \times H_2$, with com $\|x\|_{_{H_1}}=\|y\|_{_{H_2}}=1$ and $\displaystyle\lim_{n\rightarrow \infty} \|T(x_n,y_n)\|_{_{K}} =
	\|T\| = \tau$.
	
	Let $(z_n)_{_{n\in \Nset}} \in K$ be the sequence defined by $z_n =
	\tau^{-1} T(x_n,y_n)$. For all $n\in \Nset$, we have $\|z_n\|_{_{K}} \leq 1$, since
	\[
	\begin{array}{rcl}
	\|z_n\|_{_{K}}&=&\|\tau^{-1}T(x_n,y_n)\|_{_{K}}=\tau^{-1}\|T(x_n,y_n)\|_{_{K}}\\
	&\leq& \tau^{-1}\|T\| \|x_n\|_{_{H_1}} \|y_n\|_{_{H_2}}\\
	&=&\tau^{-1} \tau =1 \,.\\
	\end{array}
	\]
	
	Since $T$ is compact and $\{(x_n,y_n)\}_{_{{n\in\Nset}}}$ is a bounded sequence in $  H_1 \times H_2$, the sequence $T(x_n,y_n)$ has a convergent subsequence in $K$, that is, there exists $z \in K$ such that $z_{n_{k}} = \lambda^{-1} T(x_{n_{k}},y_{n_{k}}) \rightarrow z $, when $k \rightarrow \infty$. Note that $\|z\|_{_{K}}=1$, since
	\[
	\begin{array}{rcl}
	\|z\|_{_{K}} & =& \displaystyle\lim_{k\rightarrow \infty}\|z_{n_{k}}\|_{_{K}} = \displaystyle\lim_{k\rightarrow \infty} \|\tau^{-1} T(x_{n_{k}},y_{n_{k}})\|_{_{K}} \vspace{0.1cm}\\
	& =& \tau^{-1} \displaystyle\lim_{k\rightarrow \infty} \|T(x_{n_{k}},y_{n_{k}})\|_{_{K}} \vspace{0.1cm}\\
	&=& \tau^{-1} \tau = 1 \,.\\
	\end{array}
	\]
	
	Now, as $\|y_{n_{k}}\|_{_{H_2}} = 1$, for all $k\in \Nset$, then $(y_{n_{k}})_{_{k \in \Nset}}$ is a bounded sequence in $H_2$. By Lemma \ref{fih1compacto} the operator 	$\phi_{H_1} : H_2 \rightarrow \mathfrak{L}(H_1,K)$ is compact, then 	$(\phi_{H_1}(y_{n_{k}}))_{_{k\in \Nset}}$ has a convergent subsequence. Thus, there exists a subsequence $(y_{n_{k_{s}}})_{_{s\in\Nset}} \subset (y_{n_{k}})_{_{k\in \Nset}}$ such that $(\phi_{H_1}(y_{n_{k_{s}}}))_{_{s \in \Nset}}$ converges in 	$\mathfrak{L}(H_1,K)$, that is, there exists $\phi_1 \in \mathfrak{L}(H_1,K)$ such that $\phi_{H_1}(y_{n_{k_{s}}})	\rightarrow \phi_1 $, when $s \rightarrow \infty$.
	
	We note that, given $y \in H_2$ such that $\|y\|_{_{H_2}}=1$, we have $\phi_{H_1}(y) \leq T$. Indeed, since $\phi_{H_1}(y) \in \mathfrak{L}(H_1,K)$, 
	\[
	\begin{array}{rcl}
	\|\phi_{H_1}(y)\|_{_{\mathfrak{L}(H_1,K)}}&=&\displaystyle
	\sup_{\|x\|_{_{H_1}}=1} \|
	\phi_{H_1}(y)(x)\|_{_{K}}=\displaystyle\sup_{\|x\|_{_{H_1}}=1} \| T(x,y)\|_{_{K}}\\
	&\leq&\displaystyle\sup_{\|x\|_{_{H_1}}=1 , \|y\|_{_{H_2}}=1} \| T(x,y)\|_{_{K}}=\|T|.\\
	\end{array}
	\]
	
	As $ \phi_{H_1}(y_{n_{k_{s}}}) \rightarrow \phi_1 $ and $\|\phi_{H_1}(y_{n_{k_{s}}})\|_{_{\mathfrak{L}(H_1,K)}} \leq T $, for all $s \in \Nset$, then $\|\phi_1 \|_{_{\mathfrak{L}(H_1,K)}} \leq T$. Thus,  
	\[
	\begin{array}{rcl}
	\|\phi^*_{H_1}(y_{n_{k_{s}}})(z_{n_{k_{s}}}) - \tau x_{n_{k_{s}}}\|_{_{H_1}}^2 & =& \langle \phi^*_{H_1}(y_{n_{k_{s}}})(z_{n_{k_{s}}}) - \tau x_{n_{k_{s}}},\phi^*_{H_1}(y_{n_{k_{s}}})(z_{n_{k_{s}}}) - \tau x_{n_{k_{s}}} \rangle \vspace{0.1cm}\\
	&=& \langle \phi^*_{H_1}(y_{n_{k_{s}}})(z_{n_{k_{s}}}),\phi^*_{H_1}(y_{n_{k_{s}}})(z_{n_{k_{s}}})\rangle \\
	& &-2\tau \langle \phi^*_{H_1}(y_{n_{k_{s}}})(z_{n_{k_{s}}}),x_{n_{k_{s}}}\rangle + \tau^2\vspace{0.1cm}\\
	&=&\|\phi^*_{H_1}(y_{n_{k_{s}}})(z_{n_{k_{s}}})\|_{_{H_1}}^2 - 2 \tau \langle z_{n_{k_{s}}},\phi_{H_1}(y_{n_{k_{s}}})(x_{n_{k_{s}}})\rangle +\tau^2 \\
	&=&\|\phi^*_{H_1}(y_{n_{k_{s}}})(z_{n_{k_{s}}})\|_{_{H_1}}^2 \\
	& &- 2 \tau \langle \tau^{-1}T(x_{n_{k_{s}}},y_{n_{k_{s}}}),T(x_{n_{k_{s}}},y_{n_{k_{s}}}))\rangle + \tau^2\\
	&\leq& \|\phi^*_{H_1}(y_{n_{k_{s}}})\|_{_{\mathfrak{L}(K,H_1)}}^2\|z_{n_{k_{s}}}\|_{_{K}}^2-2\|T(x_{n_{k_{s}}},y_{n_{k_{s}}})\|^2_{_{K}} + \tau^2\\
	&\leq&\|\phi^*_{H_1}(y_{n_{k_{s}}})\|_{_{\mathfrak{L}(K,H_1)}}^2-2\|T(x_{n_{k_{s}}},y_{n_{k_{s}}})\|_{_{K}}^2 + \tau^2\\
	&=&\|\phi_{H_1}(y_{n_{k_{s}}})\|_{_{\mathfrak{L}(K,H_1)}}^2-2\|T(x_{n_{k_{s}}},y_{n_{k_{s}}})\|_{_{K}}^2 + \tau^2\\
	&\leq& \|T\|^2 +\tau^2  -2 \|T(x_{n_{k_{s}}},y_{n_{k_{s}}})\|_{_{K}}^2\\
	&=& 2\tau^2  -2 \|T(x_{n_{k_{s}}},y_{n_{k_{s}}})\|_{_{K}}^2\, .\\
	\end{array}
	\]
	Then,
	\[
	\|\phi^*_{H_1}(y_{n_{k_{s}}})(z_{n_{k_{s}}}) - \tau
	x_{n_{k_{s}}}\|_{_{H_1}}^2\leq 2\tau^2  -2
	\|T(x_{n_{k_{s}}},y_{n_{k_{s}}})\|_{_{K}}^2 \rightarrow 0 \, ,
	\]
	when $s \rightarrow \infty$. Since $\phi_{H_1}(y_{n_{k_{s}}}) \rightarrow \phi_1$, then  $\|	\phi_1- \phi_{H_1}(y_{n_{k_{s}}})\|_{_{\mathfrak{L}(H_1,K)}} \rightarrow 0$, when $s \rightarrow \infty$, giving
	\[
	\| \phi^*_1 -
	\phi^*_{H_1}(y_{n_{k_{s}}})\|_{_{\mathfrak{L}(K,H_1)}}=\|(\phi_1-\phi_{H_1}(y_{n_{k_{s}}}))^*\|_{_{\mathfrak{L}(K,H_1)}}=\|\phi_1-\phi_{H_1}(y_{n_{k_{s}}})\|_{_{\mathfrak{L}(H_1,K)}} \rightarrow 0 \,.
	\]
	
	Thus, we have 
	\[
	\begin{array}{rcl}
	\|\phi^*_{1}(z) - \tau x_{n_{k_{s}}}\|_{_{H_1}} &=& \|\phi^*_{1}(z) - \phi^*_{H_1}(y_{n_{k_{s}}})(z_{n_{k_{s}}}) + \phi^*_{H_1}(y_{n_{k_{s}}})(z_{n_{k_{s}}}) - \tau x_{n_{k_{s}}}\|_{_{H_1}} \vspace{0.1cm}\\
	&\leq&\|\phi^*_{1}(z) - \phi^*_{H_1}(y_{n_{k_{s}}})(z_{n_{k_{s}}})\|_{_{H_1}} + \|\phi^*_{H_1}(y_{n_{k_{s}}})(z_{n_{k_{s}}}) - \tau x_{n_{k_{s}}}\|_{_{H_1}} \vspace{0.1cm}\\
	& = &\|\phi^*_{1}(z) - \phi^*_{1}(z_{n_{k_{s}}}) + \phi^*_{1}(z_{n_{k_{s}}}) - \phi^*_{H_1}(y_{n_{k_{s}}})(z_{n_{k_{s}}})\|_{_{H_1}}\\
	& & + \|\phi^*_{H_1}(y_{n_{k_{s}}})(z_{n_{k_{s}}}) - \tau x_{n_{k_{s}}}\|_{_{H_1}} \vspace{0.1cm}\\
	& \leq &\|\phi^*_{1}(z) - \phi^*_{1}(z_{n_{k_{s}}})\|_{_{H_1}} + \|\phi^*_{1}(z_{n_{k_{s}}}) - \phi^*_{H_1}(y_{n_{k_{s}}})(z_{n_{k_{s}}})\|_{_{H_1}} \\
	& &+ \|\phi^*_{H_1}(y_{n_{k_{s}}})(z_{n_{k_{s}}}) - \tau x_{n_{k_{s}}}\|_{_{H_1}} \vspace{0.1cm}\\
	&\leq&\|\phi^*_{1}\|_{_{\mathfrak{L}(K,H_1)}} \|z - z_{n_{k_{s}}}\|_{_{K}} + \|\phi^*_{1} - \phi^*_{H_1}(y_{n_{k_{s}}})\|_{_{\mathfrak{L}(K,H_1)}} \|z_{n_{k_{s}}}\|_{_{K}} \\
	& &+ \|\phi^*_{H_1}(y_{n_{k_{s}}})(z_{n_{k_{s}}}) - \tau x_{n_{k_{s}}}\|_{_{H_1}} \,. \\
	\end{array}
	\]
	
	As $(z_{n_{k_{s}}})_{s\in \Nset} \subset (z_{n_{k}})_{k\in \Nset}$ and $z_{n_{k}}\rightarrow z$, then $z_{n_{k_{s}}} \rightarrow z$. Since the terms $\|z - z_{n_{k_{s}}}\|_{_{K}}$, $\|\phi^*_{1} -\phi^*_{H_1}(y_{n_{k_{s}}})\|_{_{\mathfrak{L}(K,H_1)}}$ and $\|\phi^*_{H_1}(y_{n_{k_{s}}})(z_{n_{k_{s}}}) - \tau x_{n_{k_{s}}}\|_{_{H_1}}$ converge to zero, we get $\tau x_{n_{k_{s}}} \rightarrow \phi^*_{1}(z)$, or $x_{n_{k_{s}}}
	\rightarrow \tau^{-1} \phi^*_{1}(z)$. Define $x = \tau^{-1}	\phi^*_{1}(z) \in H_1$, thus, $x_{n_{k_{s}}} \rightarrow x$.
	
	The same calculations are valid for the operator $\phi_{H_2}: H_1 \rightarrow \mathfrak {L}(H_2, K)$, where an operator $\phi_2: H_2 \rightarrow K$ is obtained such that 
	$\|\phi_2^*(z) - \tau y_{n_{k_{s}}}\|_{_{H_2}} \rightarrow 0$. Define $y = \tau^{-1} 	\phi_2^*(z) \in H_2$, then $y_{n_{k_{s}}} \rightarrow y$. For all $h_1 \in H_1$ we have
	\[
	\phi_1(h_1) = \lim_{s \rightarrow \infty}
	\phi_{H_1}(y_{n_{k_{s}}})(h_1) = \lim_{s \rightarrow \infty}
	T(h_1,y_{n_{k_{s}}}) = T(h_1,y) = \phi_{H_1}(y)(h_1)\,,
	\]
	implying $\phi_1 = \phi_{H_1}(y)$. With the same reasoning we get $\phi_2 =\phi_{H_2}(x)$. Since 
	\[
	x = \tau^{-1}\phi_1^* (z) = \tau^{-1} \phi_{H_1}^*(y)(z) \,,
	\]
	then $\phi_{H_1}^*(y)(z) = \tau x$, and in the same way, $\phi_{H_2}^*(x)(z) = \tau y$. Since $\tau z_{n_{k_{s}}} = T(x_{n_{k_{s}}},y_{n_{k_{s}}})$, taking the limit, we get $T(x,y)=\tau z$. It is easy to show that $\|x\|_{_{H_1}}=1$ and $\|y\|_{_{H_2}}=1$, giving that $\|T\|$ is a singular value of $T$.
\end{proof}

\section{Schmidt representation of a bilinear operator}

If $T \in Bil(H_1 \times H_2,K)$ is compact, then $\|T\|$ is a singular value. Now, we can define the Schmidt representation of an operador $T \in Bil(H_1 \times H_2,K)$.
\begin{definition}\label{defschmidt} The summation 
	\[
	\sum_{i=1}^{\infty} \tau_i \langle \cdot,x_i\rangle \langle \cdot,y_i\rangle z_i
	\]
	is a Schmidt representation of the operador $T \in Bil(H_1 \times H_2,K)$ if the following conditions are satisfied:
	\begin{enumerate}
		\item $(\tau_i) \in c_0$;
		\item $(x_i)$, $(y_i)$ and $(z_i)$ are orthonormal extended sequences of $H_1$, $H_2$ and $K$, respectively;
		\item $T(x,y)=\displaystyle\sum_{i=1}^{\infty} \tau_i \langle x,x_i\rangle \langle y,y_i\rangle z_i$ for all 
		$(x,y) \in H_1 \times H_2$;
		\item If $x_i \neq 0$, $y_i \neq 0$ and $z_i \neq 0$, then it follows that $\langle T(x_i,y_i),z_i\rangle = \tau_i$. Otherwise, if at least one of the elements $x_i$, $y_i$ or $z_i$ is zero, the value of the corresponding coefficient $\tau_i$ has no effect. Therefore we naturally assume that $\tau_i = 0$. Then, we get $\langle T(x_i,y_i),z_i\rangle = \tau_i$ for all $i\in \Nset$ \, .
	\end{enumerate}
\end{definition}
The Schmidt representation is monotone if $\tau_1 \geq \tau_2 \geq \ldots \geq 0$.

If $H$ and $K$ are Hilbert spaces of finite dimension, and $ T: H \rightarrow K$ is a linear operator with rank $r$, so the operator $T$ has $r$ singular values. We will see in the Example \ref{exschmidt01} that this does not occur for bilinear operators. We saw that every compact linear operator $T \in \mathfrak{L}(H, K)$ has a monotonous Schmidt representation. We will see in Examples \ref{exschmidt01} and \ref{exschmidt02} that not all compact bilinear operator $T \in Bil(H_1 \times H_2, K)$ has a Schmidt representation.

\begin{example}\label{exschmidt01}
	Let $T: \Rset^3 \times \Rset^2 \rightarrow \Rset^4$ defined in the following way: if $x=(a_1,a_2,a_3) \in \Rset^3$, $y=(b_1,b_2) \in \Rset^2$ and $z=(u_1,u_2,u_3,u_4)\in \Rset^4$, then:
	\[
	T(x,y)=(2a_1b_1, \, 3a_2b_2,\, 0,\, 0)\,.
	\]
	It is simple to verify that $T\in Bil(\Rset^3 \times \Rset^2,\Rset^4)$. Let $y\in \Rset^2$ fixed. We have $\phi_{\Rset^3}(y):\Rset^3 \rightarrow \Rset^4$ is given by $\phi_{\Rset^3}(y)(x)=T(x,y)$. Let $\{e_1,e_2,e_3\}$ be the canonical basis of $\Rset^3$, then we have 
	\[
	\left \{ \begin{array}{rcl}
	\phi_{\Rset^3}(y)(e_1)&=&(\,2 b_1,\, 0,\,0, \, 0)\\
	\phi_{\Rset^3}(y)(e_2)&=&(\,0,\,3 b_2,\,0,\,0)\\
	\phi_{\Rset^3}(y)(e_3)&=&(\,0,\,0,\,0,\,0).\\
	\end{array}\right.
	\]
	That is,
	\[
	A=\left [\begin{array}{ccc}
	2b_1 & 0 & 0\\
	0 & 3 b_2 & 0 \\
	0 & 0 & 0 \\
	0 & 0 & 0 \\
	\end{array} \right ]
	\]
	is the matrix of $\phi_{\Rset^3}(y):\Rset^3 \rightarrow \Rset^4$ whence the canonical basis of $\Rset^3$ e $\Rset^4$ is fixed. To find $\phi^*_{\Rset^3}(y): \Rset^4 \rightarrow \Rset^3$, we just calculate the transposed matrix of $A$. Then, 
	\[
	\phi^*_{\Rset^3}(y)(z)=(\,2b_1u_1,\, 3b_2u_2, \,0)\,.
	\]
	Let $x\in \Rset^3$ fixed, in an analogous way we show that
	\[
	\phi^*_{\Rset^2}(x)(z)=(\,2a_1u_1,\, 3a_2u_2) \,.
	\]
	To find the singular values of $T$ we have to solve the system:
	\begin{equation}
	\left \{ \begin{array}{rcl}
	T(x_0,y_0)&=&\tau z_0\\
	\phi^*_{\Rset^3}(y_0)(z_0)&=&\tau x_0\\
	\phi^*_{\Rset^2}(x_0)(z_0)&=&\tau y_0 \, ,\\
	\end{array}\right.
	\label{sys1}
	\end{equation}
	where $(x_0, \, y_0, \, z_0)$ is a triple of singular vectors associated to the singular value $\tau$. Replacing the operators in (\ref{sys1}), we get
	\[
	\left \{ \begin{array}{rclcrcl}
	2 a_1 b_1 & = & \tau u_1 & & 3 a_2 b_2 & = & \tau u_2\\
	0& =& \tau u_3 & & 0& = &\tau u_4 \\
	2 b_1 u_1 & =& \tau a_1 & & 3 b_2 u_2&= &\tau a_2\\
	0& =& \tau a_3 & & 2 a_1 u_1 &= &\tau b_1\\
	3 a_2 u_2& = &\tau b_2 & & (a_1)^2+(a_2)^2+(a_3)^2&= &1\\
	(b_1)^2+(b_2)^2&= &1 & & (u_1)^2+(u_2)^2+(u_3)^2+(u_4)^2 &= &1\\
	\tau & > & 0\, . & & & &\\
	\end{array}\right.
	\]
	
	Using a computational algebraic system as Maple or MatLab, we solve the system above and obtain the following singular values, with their respective associated triple of singular vectors.
	
	$\tau=2$:
	\begin{align*}
	\left (x=(1,0,0) , y=(1,0), z=(1,0,0,0)\right ) \\
	\left (x=(1,0,0),y=(-1,0) , z=(-1,0,0,0)\right ) \\
	\left ( x=(-1,0,0), y=(1,0), z=(-1,0,0,0)\right ) \\
	\left (x=(-1,0,0),y=(-1,0),z=(1,0,0,0)\right) \,.
	\end{align*}
	$\tau=3$:
	\begin{align*} 
	(x=(0,1,0) , y=( 0, 1) , z=( 0,  1,	0, 0)) \\
	(x=(0,1,0), y=(0,-1) ,z=(0, -1, 0, 0))\\
	(x=(0, -1, 0, ), y=(0, 1) , z=( 0,  -1, 0, 0)) \\
	(x=( 0, -1, 0),y=( 0, -1) , z=( 0, 1,  0, 0)) \,.
	\end{align*}
	
	$\tau = \displaystyle \frac{6}{13} \sqrt{13}$:
	\begin{eqnarray*}
		&&x=(\displaystyle \frac{3}{13} \sqrt{13}, \displaystyle
		\frac{2}{13} \sqrt{13}), 0) , y=(\displaystyle \frac{3}{13}
		\sqrt{13}, \displaystyle \frac{2}{13} \sqrt{13}), z=(\displaystyle
		\frac{3}{13}
		\sqrt{13}, \displaystyle \frac{2}{13} \sqrt{13}, 0, 0)\\
		&&x=(\displaystyle \frac{3}{13} \sqrt{13}, \displaystyle
		-\frac{2}{13} \sqrt{13}, 0) , y=( \displaystyle \frac{3}{13}
		\sqrt{13},\displaystyle -\frac{2}{13} \sqrt{13}), z =(\displaystyle
		\frac{3}{13} \sqrt{13}, \displaystyle \frac{2}{13} \sqrt{13},0,0)\\
		&&x=(-\displaystyle \frac{3}{13} \sqrt{13}, \displaystyle
		\frac{2}{13} \sqrt{13},  0) , y=( -\displaystyle \frac{3}{13}
		\sqrt{13},\displaystyle \frac{2}{13} \sqrt{13}) ,z =(\displaystyle
		\frac{3}{13} \sqrt{13}, \displaystyle \frac{2}{13} \sqrt{13},  0,
		0)\\
		&&x=( -\displaystyle \frac{3}{13} \sqrt{13}, -\displaystyle
		\frac{2}{13} \sqrt{13},  0), y = (-\displaystyle \frac{3}{13}
		\sqrt{13}, -\displaystyle \frac{2}{13} \sqrt{13}),z=( \displaystyle
		\frac{3}{13} \sqrt{13}, \displaystyle \frac{2}{13} \sqrt{13},  0,
		0)\\
		&&x=( -\displaystyle \frac{3}{13} \sqrt{13}, \displaystyle
		\frac{2}{13} \sqrt{13},0) , y = (\displaystyle \frac{3}{13}
		\sqrt{13},\displaystyle \frac{2}{13} \sqrt{13}),z=(-\displaystyle
		\frac{3}{13} \sqrt{13}, \displaystyle \frac{2}{13} \sqrt{13},  0,
		0)\\
		&&x=( -\displaystyle \frac{3}{13} \sqrt{13},  -\displaystyle
		\frac{2}{13} \sqrt{13},  0) , y = (\displaystyle \frac{3}{13}
		\sqrt{13},  -\displaystyle \frac{2}{13} \sqrt{13}),
		z=(-\displaystyle \frac{3}{13} \sqrt{13}, \displaystyle \frac{2}{13}
		\sqrt{13}, 0,0)\\
		&&x=(\displaystyle \frac{3}{13} \sqrt{13}, \displaystyle
		\frac{2}{13} \sqrt{13}, = 0), y=( -\displaystyle \frac{3}{13}
		\sqrt{13}, \displaystyle \frac{2}{13} \sqrt{13}), z=( -\displaystyle
		\frac{3}{13} \sqrt{13}, \displaystyle \frac{2}{13} \sqrt{13}, 0,
		0)\\
		&&x=(\displaystyle \frac{3}{13} \sqrt{13}, -\displaystyle
		\frac{2}{13} \sqrt{13},  0) , y=(-\displaystyle \frac{3}{13}
		\sqrt{13}, -\displaystyle \frac{2}{13} \sqrt{13}),z=( -\displaystyle
		\frac{3}{13} \sqrt{13},\displaystyle \frac{2}{13} \sqrt{13},  0, 0) \,.
	\end{eqnarray*}
	
	Note that if
	\[
	\tau_1=3 \, \, \mbox{ and } \, \,x_1=(\,0,\,1,\,0) , y_1=(\, 0,\, 1) , z_1=(\, 0,\, 1,\, 0,\, 0)\]
	\[
	\tau_2= 2\, \, \mbox{ and } \, \, x_2=(\,1,\,0,\,0) , y_2=(\,1,\,0), z_2=(\,1,\,0,\,0,\,0 ) \,,
	\]
	then,	
	\[
	T(x,y)=\sum^2_{i=1}\tau_i <x,x_i><y,y_i>z_i
	\]
	for all $(x,y) \in \Rset^3 \times \Rset^2$. We have that the rank of $T$ is $2$ and $T$ has three singular values.
\end{example}

\begin{example}\label{exschmidt02}
	Let $T: \Rset^3 \times \Rset^2 \rightarrow \Rset^4$ defined in the following way: if $x=(a_1,a_2,a_3) \in \Rset^3$, $y=(b_1,b_2) \in \Rset^2$ and $z=(u_1,u_2,u_3,u_4)\in \Rset^4$, then:
	\[
	T(x,y)=(a_1b_1, \, b_1(a_1+a_2), \, b_1 a_1, \, b_2(a_1+a_3))\,.
	\]
	Analogously to Example \ref{exschmidt01}, we show that
	\[
	\phi^*_{\Rset^3}(y)(z)=(b_1u_1+b_1u_2+b_1u_3+b_2u_4,\, b_1u_2,\,
	b_2u_4)\,,
	\]
	\[
	\phi^*_{\Rset^2}(x)(z)=(a_1u_1+(a_1+a_2)u_2+a_1u_3,\,
	(a_1+a_3)u_4)\,.
	\]
	
	After some calculations, similarly to Example \ref{exschmidt01}, we have that the singular values of $T$, with their respective associated triple of singular vectors, are:\\
	
	$\tau=\displaystyle \sqrt{2}$
	\begin{eqnarray*}
		&&x= (\displaystyle\frac{1}{2}\sqrt{2},0,\displaystyle\frac{1}{2}\sqrt{2}), y=(0,1), z=(0,0,0,1)\\
		&&x=(-\displaystyle\frac{1}{2}\sqrt{2},0,-\displaystyle\frac{1}{2}\sqrt{2}), y=(0,1), z=(0,0,0,-1)\\
		&&x=(-\displaystyle\frac{1}{2}\sqrt{2},0,-\displaystyle\frac{1}{2}\sqrt{2}), y=(0,-1), z=(0,0,0,1)\\
		&&x=(\displaystyle\frac{1}{2}\sqrt{2},0,\displaystyle\frac{1}{2}\sqrt{2}),
		y=(0,-1), z=(0,0,0,-1) \,.
	\end{eqnarray*}
	$\tau = \displaystyle \sqrt{2+\sqrt{2}}$
	\begin{eqnarray*}
		&&x=(\displaystyle\frac{1}{2}\sqrt{2+\sqrt{2}},\frac{1}{2}(-1+\sqrt{2})\sqrt{2+\sqrt{2}},0), y=(1,0), z=(\frac{1}{2},\frac{1}{2}\sqrt{2},\frac{1}{2},0)\\
		&&x=(-\displaystyle\frac{1}{2}\sqrt{2+\sqrt{2}},-\frac{1}{2}(-1+\sqrt{2})\sqrt{2+\sqrt{2}},0), y=(1,0), z=(-\frac{1}{2},-\frac{1}{2}\sqrt{2},-\sqrt{1}{2},0)\\
		&&x=(-\displaystyle\frac{1}{2}\sqrt{2+\sqrt{2}},-\frac{1}{2}(-1+\sqrt{2})\sqrt{2+\sqrt{2}},0), y=(-1,0), z=(\frac{1}{2},\frac{1}{2}\sqrt{2},\frac{1}{2},0)\\
		&&x=(\frac{1}{2}\sqrt{2+\sqrt{2}},\frac{1}{2}(-1+\sqrt{2})\sqrt{2+\sqrt{2}},0),
		y=(-1,0), z=(-\frac{1}{2},-\frac{1}{2}\sqrt{2},-\frac{1}{2},0) \,.
	\end{eqnarray*}
	
	$\tau = \displaystyle \sqrt{2-\sqrt{2}}$
	\begin{eqnarray*}
		&&x=(-\frac{1}{2}\sqrt{2-\sqrt{2}},-\frac{1}{2}(-1-\sqrt{2})\sqrt{2-\sqrt{2}},0), y=(1,0), z=(-\frac{1}{2},\frac{1}{2}\sqrt{2},-\frac{1}{2},0)\\
		&&x=(\frac{1}{2}\sqrt{2-\sqrt{2}},\frac{1}{2}(-1-\sqrt{2})\sqrt{2-\sqrt{2}},0), y=(1,0) , z=(\frac{1}{2},-\frac{1}{2}\sqrt{2},\frac{1}{2},0),\\
		&&x=(\frac{1}{2}\sqrt{2-\sqrt{2}},\frac{1}{2}(-1-\sqrt{2})\sqrt{2-\sqrt{2}},0), y=(-1,0) , z=(-\frac{1}{2},\frac{1}{2}\sqrt{2},-\frac{1}{2},0)\\
		&&x=(-\frac{1}{2}\sqrt{2-\sqrt{2}},-\frac{1}{2}(-1-\sqrt{2})\sqrt{2-\sqrt{2}},0),
		y=(-1,0), z=(\frac{1}{2},-\frac{1}{2}\sqrt{2},\frac{1}{2},0).
	\end{eqnarray*}
	$\tau=\displaystyle \frac{1}{2}\sqrt{2}$\vspace{-0.3cm}
	\begin{eqnarray*}
		&&x=(0,\frac{1}{2}\sqrt{2},-\frac{1}{2}\sqrt{2}), y=(-\frac{1}{2}\sqrt{2},\frac{1}{2}\sqrt{2}), z=(0,-\frac{1}{2}\sqrt{2},0,-\frac{1}{2}\sqrt{2})\\
		&&x=(0,\frac{1}{2}\sqrt{2},-\frac{1}{2}\sqrt{2}), y=(\frac{1}{2}\sqrt{2},-\frac{1}{2}\sqrt{2}), z=(0,\frac{1}{2}\sqrt{2},0,\frac{1}{2}\sqrt{2})\\
		&&x=(0,-\frac{1}{2}\sqrt{2},\frac{1}{2}\sqrt{2}), y=(-\frac{1}{2}\sqrt{2},\frac{1}{2}\sqrt{2}), z=(0,\frac{1}{2}\sqrt{2},0,\frac{1}{2}\sqrt{2})\\
		&&x=(0,-\frac{1}{2}\sqrt{2},\frac{1}{2}\sqrt{2}),
		y=(\frac{1}{2}\sqrt{2},-\frac{1}{2}\sqrt{2}),
		z=(0,-\frac{1}{2}\sqrt{2},0,-\frac{1}{2}\sqrt{2})\,.
	\end{eqnarray*}
	
	After analyzing the singular values of $T$, we conclude that $T$ does not have a Schmidt representation according to the Definition \ref{defschmidt}.
\end{example}

Examples \ref{exschmidt01} and \ref{exschmidt02} show us that $ T \in Bil (H_1 \times H_2, K)$ be compact is not a sufficient condition for $T$ to have a Schmidt representation. To work around this problem, we will have to require more properties from the operator $T \in Bil (H_1 \times H_2, K)$.

\begin{definition} Let $\tau_1 $ be a singular value of $T \in Bil (H_1 \times H_2, K)$. Then, $\tau_1$ is a ordered singular value of $T$ if:

	\[\left \{ \begin{array}{rcc}
	T(x,y_1) = \tau_1 \langle x,x_1 \rangle z_1 & , & \mbox{for all } x\in H_1\\
	T(x_1,y)=\tau_1 \langle y,y_1 \rangle z_1 & , & \mbox{for all } y\in H_2\\
	\phi^*_{H_1}(y)(z_1) = \tau_1 \langle y,y_1 \rangle x_1 & , & \mbox{for all } y\in H_2
	\end{array}
	\right .
	\]
	for some triple $(x_1 \,, y_1 \,, z_1) $ of singular vectors associated with the singular value $\tau_1$. In this case, $x_1$, $y_1$ and $ z_1 $ are ordered singular vectors associated with the ordered singular value $\tau_ {_{1}}$.
\end{definition}

\begin{example} In Example \ref{exschmidt01}, the singular values $\tau= 2 $ and $\tau=3$ are ordered, and the singular value $\tau=\frac{6}{13}\sqrt{13}$ is not ordered. In Example \ref{exschmidt02}, all singular values are not ordered.
\end{example}

With this definition of ordered singular value, we show in Theorem \ref{existenciadeschmidt} that, if $ T \in Bil(H_1 \times H_2, K)$ is compact and it has a sequence of ordered singular values, such that their respective ordered singulars vectors form an orthonormal sequence of vectors, so $T$ has a Schmidt representation.

\begin{theorem}
	\label{existenciadeschmidt}
	
	Let $T \in Bil(H_1 \times H_2,K)$ be a nonzero and compact operator. Let us define a sequence of operators $(T_k)$ in
	$Bil(H_1 \times H_2,K)$ in the following way. For all $(x,y)\in H_1 \times H_2$, we set
	\[
	T_1(x,y)=T(x,y)\;\;\;\;\; , \;\;\;\;\; \tau_1=||T|| \,,
	\]
and let $(x_1\, , y_1 \, ,z_1)$ be a triple of singular vectors associated to $\tau_1$, that is, they satisfy
	\[
	\begin{array}{rcl}
	T(x_1,y_1) & = &\tau_1 z_1 \\
	\phi^*_{H_1}(y_1)(z_1) & = &\tau_1 x_1 \\
	\phi^*_{H_2}(x_1)(z_1) & = &\tau_1 y_1 \\
	\end{array}
	\]
	where
	\[
	\begin{array}{rlrr}
	\phi_{H_1}(y)&:H_1 \rightarrow K  &  & \phi_{H_1}(y)(x)=T(x,y)  \vspace{0.0cm} \\
	\phi_{H_2}(x)&: H_2 \rightarrow K &  & \phi_{H_2}(x)(y)=T(x,y)\vspace{0.0cm}\\
	\end{array}
	\]
	for all  $(x,y) \in H_1 \times H_2$.
	
	Suppose that $\tau_1$ is a \textbf{ordered singular value} of $T$ and $x_1$, $ y_1 $ and $z_1$ are ordered singular vectors associated to $\tau_1$, that is,
	\[
	\left \{ \begin{array}{rc}
	T(x,y_1)=\tau_1 \langle x,x_1\rangle z_1 &   \\
	T(x_1,y)=\tau_1 \langle y,y_1\rangle z_1 &   \\
	\phi^*_{H_1}(y)(z_1)=\tau_1 \langle y,y_1\rangle x_1 &
	\end{array}
	\right .
	\]
	for all $(x,y) \in H_1 \times H_2$.
	
	Having defined $T_{k-1}$, $\tau_{k-1}$ and $(x_{k-1}, y_{k-1}, z_{k-1})$, for $k\geq 2$, we define
	\[
	T_{k}(x,y)=T_{k-1}(x,y) -\tau_{k-1} \langle x,x_{k-1}\rangle \langle y,y_{k-1}\rangle z_{k-1}\;\;\;, \;\;\; \tau_{k}=||T_{k}|| \,,
	\]
	and $x_k$, $ y_k $ and $z_k$ are ordered singular vectors associated to $\tau_k$. 	Suppose that, for each $k$, we have
	\[
	\left \{ \begin{array}{rc}
	T(x,y_k)=\tau_k \langle x,x_k\rangle z_k &   \\
	T(x_k,y)=\tau_k \langle y,y_k\rangle z_k &   \\
	\phi^*_{H_1}(y)(z_k)=\tau_k \langle y,y_k\rangle x_k &
	\end{array}
	\right .
	\]
	for all $(x,y) \in H_1 \times H_2$.
	
	If $T_k$ is nonzero for all $k$, then $(\tau_k)_k$ is a decreasing sequence of singular values of $T$ converging to zero, such that 
	\begin{equation}\label{ord02}
	T(x,y)=\displaystyle \sum^\infty_{i=1}\tau_i \langle x,x_i\rangle \langle y,y_i\rangle z_i\,,
	\end{equation}
	for all $(x,y) \in H_1 \times H_2$.
\end{theorem}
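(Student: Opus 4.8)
The plan is to mimic the inductive ``peeling-off'' (deflation) construction used for the linear singular value decomposition, exploiting two sets of relations available at each stage. First, since each $T_k$ is compact and nonzero, Theorem \ref{sing03} guarantees that $\tau_k=\|T_k\|$ is a singular value of $T_k$, so the triple $(x_k,y_k,z_k)$ satisfies the singular-vector identities for $T_k$, namely $T_k(x_k,y_k)=\tau_k z_k$ together with the two adjoint identities for the slot operators of $T_k$. Second, the hypothesis that $\tau_k$ is an \emph{ordered} singular value supplies the three relations $T(x,y_k)=\tau_k\langle x,x_k\rangle z_k$, $T(x_k,y)=\tau_k\langle y,y_k\rangle z_k$ and $\phi^*_{H_1}(y)(z_k)=\tau_k\langle y,y_k\rangle x_k$ for all $x,y$, stated now in terms of the \emph{original} operator $T$. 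Unrolling the recursion gives the telescoping identity $T_k(x,y)=T(x,y)-\sum_{i=1}^{k-1}\tau_i\langle x,x_i\rangle\langle y,y_i\rangle z_i$, which I will use repeatedly.

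The heart of the argument, and the step I expect to be the main obstacle, is to prove by strong induction on $k$ that $(x_i)$, $(y_i)$, $(z_i)$ are orthonormal families; each vector is a unit vector by normalization, so only orthogonality must be shown. Assuming orthonormality up to index $k-1$, I would first establish the vanishing identities $T_k(x_j,y)=0$ and $T_k(x,y_j)=0$ for every $j<k$ and all $x,y$: substituting the ordered relations $T(x_j,y)=\tau_j\langle y,y_j\rangle z_j$ and $T(x,y_j)=\tau_j\langle x,x_j\rangle z_j$ into the telescoping identity, the surviving sum collapses to its single $i=j$ term by the inductive orthogonality of the $x_i$ and $y_i$, and cancels. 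Next, using the singular-vector identities for $T_k$ I would write $\tau_k\langle x_j,x_k\rangle=\langle T_k(x_j,y_k),z_k\rangle=0$ and $\tau_k\langle y_j,y_k\rangle=\langle T_k(x_k,y_j),z_k\rangle=0$, so $x_k\perp x_j$ and $y_k\perp y_j$. Finally, since $\langle x_k,x_i\rangle=0$ for $i<k$ the telescoping identity gives $\tau_k z_k=T_k(x_k,y_k)=T(x_k,y_k)$, whence $\tau_k\langle z_k,z_j\rangle=\langle x_k,\phi^*_{H_1}(y_k)(z_j)\rangle$; the \emph{third} ordered relation at stage $j$ forces $\phi^*_{H_1}(y_k)(z_j)=\tau_j\langle y_k,y_j\rangle x_j=0$, giving $z_k\perp z_j$. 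This last computation is precisely where the third ordering hypothesis is indispensable.

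With orthonormality in hand, the remaining steps are routine. I would first check that each $\tau_k$ is a singular value of the \emph{original} $T$: setting $x=x_k$, $y=y_k$ in the ordered relations yields $T(x_k,y_k)=\tau_k z_k$ and $\phi^*_{H_1}(y_k)(z_k)=\tau_k x_k$, while pairing $T(x_k,y)=\tau_k\langle y,y_k\rangle z_k$ against $z_k$ recovers $\phi^*_{H_2}(x_k)(z_k)=\tau_k y_k$. For monotonicity, given a unit pair $(x,y)$ I would use $T_{k+1}(x_j,\cdot)=0=T_{k+1}(\cdot,y_j)$ for $j\le k$ to replace $x,y$ by their projections $x',y'$ onto the orthogonal complements of $\{x_1,\dots,x_k\}$ and $\{y_1,\dots,y_k\}$; then $T_{k+1}(x,y)=T_k(x',y')$ with $\|x'\|,\|y'\|\le 1$, so $\|T_{k+1}(x,y)\|\le\|T_k\|$, giving $\tau_{k+1}\le\tau_k$.

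For convergence to zero I would argue by contradiction: if $\tau_k\ge\delta>0$ for all $k$, then $T(x_k,y_k)=\tau_k z_k$ with $(z_k)$ orthonormal gives $\|T(x_k,y_k)-T(x_l,y_l)\|^2=\tau_k^2+\tau_l^2\ge 2\delta^2$ for $k\ne l$, so $(T(x_k,y_k))$ has no Cauchy subsequence, contradicting the compactness of $T$ applied to the bounded sequence $(x_k,y_k)$; hence $\tau_k\to 0$. Finally, the telescoping identity reads $T-\sum_{i=1}^{k}\tau_i\langle\cdot,x_i\rangle\langle\cdot,y_i\rangle z_i=T_{k+1}$, and since $\|T_{k+1}\|=\tau_{k+1}\to 0$ the partial sums converge to $T$ in the norm of $Bil(H_1\times H_2,K)$; evaluating at any $(x,y)$ yields the claimed representation \eqref{ord02}.
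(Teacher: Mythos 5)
Your proposal is correct and follows essentially the same deflation argument as the paper: peeling off rank-one terms, proving orthogonality of the triples via the singular-vector identities for $T_k$ together with the ordered relations (the third of which is used exactly as you say to get $z_k\perp z_j$), and obtaining $\tau_k\to 0$ from compactness via $\|T(x_k,y_k)-T(x_l,y_l)\|^2=\tau_k^2+\tau_l^2$. The only cosmetic difference is the monotonicity step, where the paper uses $T_{k+1}(x,y)\perp z_k$ and the Pythagorean identity $\|T_{k+1}(x,y)\|^2+\|\tau_k\langle x,x_k\rangle\langle y,y_k\rangle z_k\|^2=\|T_k(x,y)\|^2$ instead of your projection argument; both are valid.
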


\begin{proof}
	From Theorem \ref{sing03}, $\tau_1 = \|T\|$ is a singular value of $T$ and let $(x_1\, , y_1 \, ,z_1)$ be a triple of singular vectors associated to $\tau_1$. Let $T_2 \in Bil(H_1 \times H_2,K)$ given by 
	\begin{equation*}
	T_2(x,y)=T(x,y)-\tau_1 \langle x,x_1 \rangle \langle y,y_1 \rangle z_1\, .
	\end{equation*}
	
	Since $T$ is bounded and compact, $T_2$ is compact. If $T_2$ is nonzero, $\tau_2 = \| T_2\|$ is a singular value of $T_2$  with a triple of $(x_2\, , y_2 \, ,
	z_2)$ of singular vectors associated to $\tau_2$. We have:
	\[
	\left \{ \begin{array}{rc}
	T_2(x_2,y_2)  = \tau_2 z_2 \\
	\phi^*_{2H_1}(y_2)(z_2)  = \tau_2 x_2 \\
	\phi^*_{2H_2}(x_2)(z_2)  = \tau_2 y_2 \, .
	\end{array}
	\right . 
	\]
	
	We claim that $x_1 \perp x_2$, $y_1 \perp y_2$ and $z_1 \perp z_2$. Indeed, 
	\[
	\begin{array}{rcl}
	\tau_2 \langle x_1,x_2 \rangle &=& \langle x_1,\tau_2 x_2\rangle = \langle x_1,\phi^*_{2H_1}(y_2)(z_2)\rangle \\
	&=& \langle \phi_{2H_1}(y_2)(x_1),z_2\rangle \langle T_2(x_1,y_2), z_2\rangle \\
	&=& \langle T(x_1,y_2)-\tau_1\langle y_2,y_1\rangle z_1,z_2\rangle \\
	&=& \langle \tau_1 \langle y_2,y_1\rangle z_1-\tau_1 \langle y_2,y_1\rangle z_1, z_2\rangle \\
	&=& 0 \, .\\
	\end{array}
	\]
	In the same way,
	\[
	\begin{array}{rcl}
	\tau_2 \langle y_1,y_2\rangle &=& \langle y_1,\tau_2 y_2\rangle = \langle y_1,\phi^*_{2H_2}(x_2)(z_2)\rangle \\
	&=& \langle \phi_{2H_2}(x_2)(y_1),z_2\rangle = \langle T_2(x_2,y_1), z_2\rangle \\
	&=& \langle T(x_2,y_1)-\tau_1 \langle x_2,x_1\rangle z_1,z_2\rangle \\
	&=& \langle \tau_1 \langle x_2,x_1\rangle z_1-\tau_1 \langle x_2,x_1\rangle z_1, z_2\rangle \\
	&=& 0\, ,\\
	\end{array}
	\]
	and
	\[
	\begin{array}{rcl}
	\tau_2 \langle z_1,z_2\rangle  & = & \langle z_1,\tau_2 z_2\rangle = \langle z_1,T_2(x_2,y_2)\rangle \\
	&=& \langle z_1,T(x_2,y_2)\rangle = \langle z_1,\phi_{H_1}(y_2)(x_2)\rangle \\
	&=& \langle \phi^*_{H_1}(y_2)(z_1),x_2\rangle = \langle \tau_1 \langle y_2,y_1\rangle x_1,x_2\rangle \\
	&=&0 \, .
	\end{array}
	\]
	We have that
	\begin{align}
	\tau_2=\| T_2\|=\sup\{\|T_2(x,y)\|_{_{K}}: \mbox{ where } \|x\|_{_{H_1}}=\|y\|_{_{H_2}}=1, x\in H_1, y\in H_2\}\nonumber\\
	\tau_1=\| T\|=\sup\{\|T(x,y)\|_{_{K}}:\mbox{ where }
	\|x\|_{_{H_1}}=\|y\|_{_{H_2}}=1, x\in H_1, y\in H_2\} \, .
	\label{sing05}
	\end{align}
	
	We have that $T_2(x,y) \perp z_1$ for all $(x,y) \in H_1
	\times H_2$, and this implies that $\tau_1 \geq \tau_2$. Indeed, we have
	\[
	\begin{array}{rcl}
	\langle T_2(x,y),z_1\rangle &=& \langle T(x,y)-\tau_1 \langle x,x_1\rangle \langle y,y_1\rangle z_1,z_1\rangle \\
	&=& \langle T(x,y),z_1\rangle -\tau_1\langle x,x_1\rangle \langle y,y_1\rangle \\
	&=& \langle \phi_{H_1}(y)(x),z_1\rangle -\tau_1 \langle x,x_1\rangle \langle y,y_1\rangle \\
	&=& \langle x,\phi^*_{H_1}(y)(z_1)\rangle - \tau_1 \langle x,x_1\rangle \langle y,y_1\rangle \\
	&=& \langle x,\tau_1\langle y,y_1\rangle x_1\rangle - \langle x,\tau_1 \langle y,y_1\rangle x_1\rangle \\
	&=&0 \, .\\
	\end{array}
	\]
	
	Thus, $T_2(x,y) \perp \tau_1 \langle x,x_1\rangle \langle y,y_1\rangle z_1$, for all $(x,y) \in H_1 \times H_2$. Then, 
	\begin{multline*}
	\|T_2(x,y)\|_{_{K}}^2+\|\tau_1 \langle x,x_1\rangle \langle y,y_1\rangle z_1\|_{_{K}}^2=\|T(x,y)-\tau_1 \langle x,x_1\rangle \langle y,y_1\rangle z_1\\
	+\tau_1 \langle x,x_1\rangle \langle y,y_1\rangle z_1\|_{_{K}}^2=\|T(x,y)\|_{_{K}}^2\,
	.\vspace{-0.8cm}
	\end{multline*}
	
	Then, 
	\begin{equation}\label{sing04}
	\|T_2(x,y)\|_{_{K}}^2+\|\tau_1 \langle x,x_1\rangle \langle y,y_1\rangle z_1\|_{_{K}}^2=\|T(x,y)\|_{_{K}}^2
	\end{equation}
	for all $(x,y) \in H_1\times H_2$. From \eqref{sing04} and \eqref{sing05} we get  $\tau_1 \geq \tau_2$.
	
	Now, since $x_1\perp x_2$,  $y_1\perp y_2$ e $z_1\perp z_2$, we have that 	$T_2(x_2,y_2)=T(x_2,y_2)$, giving $T(x_2,y_2)=\tau_2 z_2$. Note that 
	\[
	\phi^*_{2H_1}(y)(z)=\phi^*_{H_1}(y)(z)-\tau_1
	\langle y,y_1\rangle \langle z,z_1\rangle x_1
	\]
	\[\phi^*_{2H_2}(x)(z)=\phi^*_{H_2}(x)(z)-\tau_1 \langle x,x_1\rangle \langle z,z_1\rangle y_1\, .
	\]
	So,
	\[
	\phi^*_{2H_1}(y_2)(z_2)=\phi^*_{H_1}(y_2)(z_2) \,\,\,\mbox{ and }\,\,\,\phi^*_{2H_2}(x_2)(z_2)=\phi^*_{H_2}(x_2)(z_2)\,,
	\]
	giving that
	\[
	\phi^*_{H_1}(y_2)(z_2)=\tau_2 x_2 \,\,\,\mbox{ and } \,\,\,\phi^*_{H_2}(x_2)(z_2)=\tau_2 y_2\, .
	\]
	Thus, we obtain that $\tau_2$ is a singular value of $T$ with a triple $(x_2\, , y_2 \, , z_2)$ of singular vectors associated to $\tau_2$.
	
	Assume that $\tau_2$ is a singular ordered value of $T$, and $x_2$ , $ y_2 $ and $z_2$ are singular ordered vectors associated to $\tau_2$, that is
	\[
	\left \{ \begin{array}{rc}
	T(x,y_2)=\tau_2 \langle x,x_2\rangle  z_2 &   \\
	T(x_2,y)=\tau_2 \langle y,y_2\rangle z_2 &   \\
	\phi^*_{H_1}(y)(z_2)=\tau_2 \langle y,y_2\rangle x_2 &
	\end{array}
	\right . 
	\]
	for all $(x,y) \in H_1 \times H_2$. Let $T_3 \in Bil(H_1 \times H_2,K)$ given by 
	\[
	T_3(x,y)=T(x,y)-\displaystyle\sum^2_{i=1}\tau_i \langle x,x_i\rangle \langle y,y_i\rangle z_i\, ,
	\]
	for all $(x,y) \in H_1  \times H_2$. Then, $T_3$ is a compact bilinear operator. If $T_3$ is nonzero, $\tau_3 = \| T_3\|$ is a singular value of $T_3$ with a triple $(x_3\, , y_3 \, , z_3)$ of singular vectors associated to $\tau_3$. We have
	\[
	\left \{ \begin{array}{rc}
	T_3(x_3,y_3)  = \tau_3 z_3 \\
	\phi^*_{3H_1}(y_3)(z_3)  = \tau_3 x_3 \\
	\phi^*_{3H_2}(x_3)(z_3)  = \tau_3 y_3 \, .
	\end{array}\right .
	\]
	
	We claim that $x_3 \perp x_i$, $y_3 \perp y_i$, $z_3 \perp
	z_i$, $i=1,2$ and $\tau_2 \geq \tau_3$. Indeed,
	\[
	\begin{array}{rcl}
	\tau_3 \langle x_i,x_3\rangle &=& \langle x_i,\tau_3 x_3\rangle = \langle x_i,\phi^*_{3H_1}(y_3)(z_3)\rangle \\
	&=& \langle \phi_{3H_1}(y_3)(x_i),z_3\rangle = \langle T_3(x_i,y_3), z_3\rangle \\
	&=& \langle T(x_i,y_3)-\displaystyle\sum^2_{k=1}\tau_k \langle x_i,x_k\rangle \langle y_3,y_k\rangle z_k,z_3\rangle \\
	&=& \langle T(x_i,y_3)-\tau_i \langle y_3,y_i\rangle z_i,z_3\rangle \\
	&=& \langle \tau_i \langle y_3,y_i\rangle z_i-\tau_i \langle y_3,y_i\rangle z_i,z_3\rangle \\
	&=&0\, ,\\
	\end{array}
	\]
	\[
	\begin{array}{rcl}
	\tau_3 \langle y_i,y_3 \rangle &=& \langle y_i,\tau_3 y_3\rangle = \langle y_i,\phi^*_{3H_2}(x_3)(z_3)\rangle \\
	&=& \langle \phi_{3H_2}(x_3)(y_i),z_3\rangle = \langle T_3(x_3,y_i),z_3\rangle \\
	&=& \langle T(x_3,y_i)-\displaystyle\sum^2_{k=1}\tau_k \langle x_3,x_k\rangle \langle y_i,y_k\rangle z_k,z_3\rangle  \\
	&=& \langle T(x_3,y_i)-\tau_i \langle x_3,x_i\rangle z_i,z_3\rangle  \\
	&=& \langle \tau_i \langle x_3,x_i\rangle z_i-\tau_i \langle x_3,x_i\rangle z_i,z_3\rangle  \\
	&=&0\, , \\
	\end{array}
	\]
	and
	\[
	\begin{array}{rcl}
	\tau_3 \langle z_i,z_3\rangle &=& \langle z_i,\tau_3 z_3\rangle = \langle z_i,T_3(x_3,y_3)\rangle  \\
	&=& \langle z_i,T(x_3,y_3)\rangle = \langle z_i,\phi_{H_1}(y_3)(x_3)\rangle  \\
	&=& \langle \phi^*_{H_1}(y_3)(z_i),x_3\rangle = \langle \tau_i \langle y_3,y_i\rangle x_i,x_3\rangle  \\
	&=&0 \, . \\
	\end{array}
	\]
	
	We have
	\[
	\tau_3=\| T_3\|=\sup\{\|T_3(x,y)\|_{_{K}}: \mbox{
	where } \|x\|_{_{H_1}}=\|y\|_{_{H_2}}=1, x\in H_1, y\in
	H_2\}
	\]
	\[
	\tau_2=\| T_2\|=\sup\{\|T_2(x,y)\|_{_{K}}: \mbox{ where } \|x\|_{_{H_1}}=\|y\|_{_{H_2}}=1, x\in
	H_1, y\in H_2\}\, .
	\]
	We also have that $T_3(x,y) \perp z_2$, for all $(x,y) \in H_1 \times H_2$. Indeed, 
	\[
	\begin{array}{rcl}
	\langle T_3(x,y),z_2\rangle &=& \langle T(x,y)-\displaystyle\sum^2_{i=1}\tau_i \langle  x,x_i\rangle \langle y,y_i\rangle z_i,z_2\rangle \\
	&=& \langle T(x,y),z_2\rangle -\tau_2 \langle x,x_2\rangle \langle y,y_2\rangle \\
	&=& \langle \phi_{H_1}(y)(x),z_2 \rangle -\tau_2 \langle x,x_2\rangle \langle y,y_2\rangle  \\
	&=& \langle x,\phi^*_{H_1}(y)(z_2)\rangle -\tau_2 \langle x,x_2\rangle \langle y,y_2\rangle  \\
	&=& \langle x,\tau_2 \langle y,y_2\rangle x_2\rangle - \langle x,\tau_2 \langle y,y_2\rangle x_2\rangle  \\
	&=&0\, . \\
	\end{array}
	\]
	So, $T_3(x,y) \perp \tau_2 \langle x,x_2\rangle \langle y,y_2\rangle z_2$, for all $(x,y) \in H_1 \times H_2$. Thus, 
	\[
	\|T_3(x,y)\|_{_{K}}^2+\|\tau_2 \langle x,x_2\rangle \langle y,y_2\rangle z_2\|_{_{K}}^2=\|T_2(x,y)\|_{_{K}}^2
	\]
	for all $(x,y) \in H_1\times H_2$, giving that $\tau_2 \geq \tau_3$.
	
	Since $x_3\perp x_i$,  $y_3\perp y_i$ and $z_3\perp z_i$, $i=1,2$, we get that $T_3(x_3,y_3)=T(x_3,y_3)$, implying $T(x_3,y_3)=\tau_3 z_3$.
	
	Note that
	\[
	\phi_{3H_1}(y)(x)=T_3(x,y)=T(x,y)-\displaystyle\sum^2_{i=1}\tau_i \langle x,x_i\rangle \langle y,y_i\rangle z_i \, ,
	\]
	then,
	\[
	\phi_{3H_1}(y)(x)=\phi_{H_1}(y)(x)-\displaystyle\sum^2_{i=1}\tau_i \langle x,x_i\rangle \langle y,y_i\rangle z_i
	\]
	For all $(x,y) \in H_1\times H_2$. Let $z \in K$, we have
	\[
	\begin{array}{rcl}
	\langle \phi_{3H_1}(y)(x),z\rangle &=& \langle \phi_{H_1}(y)(x)-\displaystyle\sum^2_{i=1}\tau_i \langle x,x_i\rangle \langle y,y_i\langle z_i,z\rangle  \\
	&=& \langle \phi_{H_1}(y)(x),z\rangle - \langle \displaystyle\sum^2_{i=1}\tau_i \langle x,x_i\rangle \langle y,y_i\rangle z_i,z\rangle \\
	&=& \langle x,\phi^*_{H_1}(y)(z)\rangle - \langle x,\displaystyle\sum^2_{i=1}\tau_i \langle y,y_i\rangle \langle z,z_i\rangle x_i\rangle\, .
	\end{array}
	\]
	Thus,
	\[
	\phi^*_{3H_1}(y)(z)=\phi^*_{H_1}(y)(z)-\displaystyle\sum^2_{i=1}\tau_i \langle y,y_i\rangle \langle z,z_i\rangle x_i\,.
	\]
	
	In a similar way, we get 
	\[
	\phi^*_{3H_2}(x)(z)=\phi^*_{H_2}(x)(z)-\displaystyle\sum^2_{i=1}\tau_i \langle x,x_i\rangle \langle z,z_i\rangle y_i\, .
	\]
	So, 
	\[
	\phi^*_{H_1}(y_3)(z_3)=\phi^*_{3H_1}(y_3)(z_3)=\tau_3 x_3
	\]
	\[
	\phi^*_{H_2}(x_3)(z_3)=\phi^*_{3H_2}(x_3)(z_3)=\tau_3 y_3\, .
	\]
	Therefore, we obtain that $\tau_3$ is a singular value of de $T$ with a triple $(x_3\, , y_3 \, , z_3)$ of singular vectors associated to $\tau_3$.
	
	Assume that $\tau_3$ is an ordered singular value of $T$ and $x_3$, $y_3$ and $z_3$ are ordered singular vectors associated to $\tau_3$, that is
	\[
	\left \{ \begin{array}{rc}
	T(x,y_3)=\tau_3 \langle x,x_3\rangle z_3 &   \\
	T(x_3,y)=\tau_3 \langle y,y_3\rangle z_3 &   \\
	\phi^*_{H_1}(y)(z_3)=\tau_3 \langle y,y_3\rangle x_3 &
	\end{array}
	\right .
	\]
	for all $(x,y) \in H_1 \times H_2$.
	
	This process continues and suppose that up to step $n$, $n \in \Nset$, we have that $\tau_i$ is an ordered singular value of $T$ with ordered vectors $x_i$, $y_i$, $z_i$, respectively, where $ i = 1,2, \ldots,n$, such that $\|T\| = \tau_1 \geq
	\tau_2 \geq \ldots \geq \tau_n$, $x_i \perp x_j$, $y_i \perp y_j $,
	$z_i \perp z_j$ always that $i\neq j$,
	\[
	\left \{ \begin{array}{rc}
	T(x_i,y_i)  = \tau_i z_i \\
	\phi^*_{H_1}(y_i)(z_i)  = \tau_i x_i \\
	\phi^*_{H_2}(x_i)(z_i)  = \tau_i y_i
	\end{array}
	\right .
	\]
	and
	\[
	\left \{ \begin{array}{rc}
	T(x,y_i)=\tau_i \langle x,x_i\rangle z_i &   \\
	T(x_i,y)=\tau_i \langle y,y_i\rangle z_i &   \\
	\phi^*_{H_1}(y)(z_i) = \tau_i \langle y,y_i\rangle x_i &
	\end{array}
	\right .
	\]
	for all $(x,y) \in H_1 \times H_2$.
	
	We also have
	\[
	\left \{ \begin{array}{rc}
	\phi_{nH_1}(y)(x)=\phi_{H_1}(y)(x)-\displaystyle\sum^{n-1}_{i=1}\tau_i \langle x,x_i\rangle \langle y,y_i\rangle z_i &   \\
	\phi_{nH_2}(x)(y)=\phi_{H_2}(x)(y)-\displaystyle\sum^{n-1}_{i=1}\tau_i \langle x,x_i\rangle \langle y,y_i\rangle z_i &  \\
	\phi^*_{nH_1}(y)(z)=\phi^*_{H_1}(y)(z)-\displaystyle\sum^{n-1}_{i=1}\tau_i \langle y,y_i\rangle \langle z,z_i\rangle x_i&  \\
	\phi^*_{nH_2}(x)(z)=\phi^*_{H_2}(x)(z)-\displaystyle\sum^{n-1}_{i=1}\tau_i \langle x,x_i\rangle \langle z,z_i\rangle y_i& \\
	T_n(x,y)=T(x,y)-\displaystyle\sum^{n-1}_{i=1}\tau_i \langle x,x_i\rangle \langle y,y_i\rangle z_i & \mbox{ e } \tau_i=\|T_i\|, \, \, \mbox{i=2,3,...,n}
	\end{array}
	\right . \, .
	\]
	
	If the process stops at any stage, say step $m$, then the operator $T_{m + 1}$ is null, and we have
	\[
	T(x,y)=\displaystyle\sum^{m}_{i=1}\tau_i \langle x,x_i\rangle \langle y,y_i\rangle z_i
	\]
that is, $T$ is a finite rank operator. If we put $\tau_i = 0$, $x_i = y_i = z_i = 0$ for $ i> m $, we have
	\[
	T(x,y)=\displaystyle\sum^{\infty}_{i=1}\tau_i \langle x,x_i\rangle \langle y,y_i\rangle z_i\, .
	\]
	
	Otherwise, we have $\tau_n = \|T_n \|$, and we claim that $\tau_n \rightarrow 0$ when $ n \rightarrow \infty$ , that is,  $\|T_n\| \rightarrow 0$, and then
	\[
	T(x,y)=\displaystyle\sum^{\infty}_{i=1}\tau_i \langle x,x_i\rangle \langle y,y_i\rangle z_i\, .
	\]
	Indeed, we have that $T$ is a compact operator. Let $T(x_i,y_i)=\tau_i z_i $ and $T(x_j,y_j) = \tau_j z_j,$ where $i\neq j,$ $i,j=1,2,\ldots, n$. Then,
	\[
	\|T(x_i,y_i) - T(x_j,y_j)\|_{_{K}} ^2 = \|\tau_i z_i -\tau_j z_j\|_{_{K}}^2 = \langle \tau_i z_i-\tau_j z_j,\tau_i z_i-\tau_j z_j\rangle = \tau_i^2 + \tau_j^2
	\]
	\[
	\|T(x_i,y_i) - T(x_j,y_j)\|_{_{K}} ^2 = \tau_i^2 + \tau_j^2\, .
	\]
	Since $(x_n,y_n)$ is bounded, $\|x_n\|_{_{H_1}}=\|y_n\|_{_{H_2}}=1$, then $(T(x_n,y_n))$ has a convergent subsequence, $(T(x_{n_k},y_{n_k}))$. So, $(T(x_{n_k},y_{n_k}))$ is a Cauchy sequence, that is, $\|T(x_{n_i},y_{n_i}) - T(x_{n_j},y_{n_j})\|_{_{K}} ^2 \rightarrow 0$. Thus, $\tau_n \rightarrow 0$. This completes the proof.
\end{proof}

\begin{remark}
	Let $(\tau_n)$ be the ordered singular values obtained in Theorem \ref{existenciadeschmidt}. We have
	\[
	\left \{ \begin{array}{rc}
	\phi_{nH_1}(y_n)(x)=\phi_{H_1}(y_n)(x) &   \\
	\phi_{nH_2}(x_n)(y)=\phi_{H_2}(x_n)(y) &  \\
	\phi^*_{nH_1}(y_n)(z)=\phi^*_{H_1}(y_n)(z) &  \\
	\phi^*_{nH_2}(x_n)(z)=\phi^*_{H_2}(x_n)(z) & \\
	\end{array}
	\right .
	\]
	for all $ (x,y)\in H_1 \times H_2$ e $z\in K$. Thus,
	\[\left \{ \begin{array}{rc}
	\phi^*_{H_1}(y_n) \circ  \phi_{H_1}(y_n)(x_n)=\tau^2_n x_n  & \mbox{ then $\tau^2_n$ is eigenvalue of  } \phi^*_{H_1}(y_n) \circ  \phi_{H_1}(y_n): H_1 \rightarrow H_1 \\
	\phi^*_{H_2}(x_n) \circ  \phi_{H_2}(x_n)(y_n)=\tau^2_n y_n  & \mbox{ then $\tau^2_n$ is eigenvalue of } \phi^*_{H_2}(x_n) \circ  \phi_{H_2}(x_n): H_2 \rightarrow H_2 \\
	\phi_{H_1}(y_n) \circ  \phi^*_{H_1}(y_n)(z_n)=\tau^2_n z_n  & \mbox{ then $\tau^2_n$ is eigenvalue of } \phi_{H_1}(y_n) \circ  \phi^*_{H_1}(y_n): K \rightarrow K \\
	\phi_{H_2}(x_n) \circ  \phi^*_{H_2}(x_n)(z_n)=\tau^2_n z_n  & \mbox{ then $\tau^2_n$ is eigenvalue of } \phi_{H_2}(x_n) \circ  \phi^*_{H_2}(x_n): K \rightarrow K \, .\\
	\end{array}
	\right .
	\]
\end{remark}

The next result give us that the conditions of Theorem 3 is fundamental.

\begin{theorem}
	Suppose that $T\in Bil(H_1\times H_2,K)$ has a monotone Schmidt representation given by 
	\[
	T(x,y):=\displaystyle\sum^{\infty}_{i=1}\tau_i \langle x,x_i\rangle \langle y,y_i\rangle z_i \,,
	\]
	where $(\tau_i)\in c_0$ and $(x_i)$, $(y_i)$ e $(z_i)$ are orthonormal sets of $H_1$, $H_2$ and $K$, respectively. We have that $T\in Bil(H_1\times H_2,K)$ is compact and the sequence  $(\tau_n)_{_{n\in \Nset}}$ is a sequence of ordered singular values of $T$.
\end{theorem}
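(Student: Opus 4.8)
The plan is to treat the two assertions separately: compactness is handled by finite-rank approximation, and the ordered-singular-value claim by a direct computation that exploits the orthonormality of the three systems.

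First I would establish compactness through the partial-sum operators
\[
T_N(x,y) = \sum_{i=1}^N \tau_i \langle x,x_i\rangle \langle y,y_i\rangle z_i ,
\]
each of which is bilinear with image contained in $[\{z_1,\ldots,z_N\}]$, hence of finite rank and therefore compact. The key estimate is that $T_N \to T$ in the norm of $Bil(H_1\times H_2,K)$: for $\|x\|_{_{H_1}}\le 1$ and $\|y\|_{_{H_2}}\le 1$, orthonormality of $(z_i)$ gives
\[
\|(T-T_N)(x,y)\|_{_{K}}^2 = \sum_{i>N}\tau_i^2\,|\langle x,x_i\rangle|^2\,|\langle y,y_i\rangle|^2 \le \Big(\sup_{i>N}\tau_i^2\Big)\sum_{i>N}|\langle x,x_i\rangle|^2 \le \sup_{i>N}\tau_i^2,
\]
where I have used Bessel's inequality together with $|\langle y,y_i\rangle|\le 1$. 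Since $(\tau_i)\in c_0$, the right-hand side tends to $0$, so $\|T-T_N\|\to 0$. As a norm-limit of compact operators is compact (the set $T(U_{H_1}\times U_{H_2})$ is totally bounded, being uniformly approximated by the precompact sets $T_N(U_{H_1}\times U_{H_2})$ inside the complete space $K$), $T$ is compact.

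Next I would record the adjoints. Fixing $y\in H_2$, from $\phi_{H_1}(y)(x)=T(x,y)$ and continuity of the inner product one obtains, for every $z\in K$,
\[
\langle \phi_{H_1}(y)(x),z\rangle_K = \sum_i \tau_i \langle x,x_i\rangle \langle y,y_i\rangle \langle z_i,z\rangle_K = \Big\langle x,\sum_i \tau_i \langle y,y_i\rangle \langle z,z_i\rangle x_i\Big\rangle_{H_1},
\]
whence $\phi^*_{H_1}(y)(z)=\sum_i \tau_i \langle y,y_i\rangle \langle z,z_i\rangle x_i$, and symmetrically $\phi^*_{H_2}(x)(z)=\sum_i \tau_i \langle x,x_i\rangle \langle z,z_i\rangle y_i$. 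With these formulas the verification is immediate from $\langle x_n,x_i\rangle=\langle y_n,y_i\rangle=\langle z_n,z_i\rangle=\delta_{ni}$: substituting the triple $(x_n,y_n,z_n)$ gives $T(x_n,y_n)=\tau_n z_n$, $\phi^*_{H_1}(y_n)(z_n)=\tau_n x_n$ and $\phi^*_{H_2}(x_n)(z_n)=\tau_n y_n$, so each $\tau_n>0$ is a singular value; keeping one argument free yields the three ordered relations $T(x,y_n)=\tau_n\langle x,x_n\rangle z_n$, $T(x_n,y)=\tau_n\langle y,y_n\rangle z_n$ and $\phi^*_{H_1}(y)(z_n)=\tau_n\langle y,y_n\rangle x_n$ for all $(x,y)$. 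Since monotonicity $\tau_1\ge\tau_2\ge\cdots$ is assumed, $(\tau_n)$ is indeed a sequence of ordered singular values.

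The only genuinely delicate points are bookkeeping rather than conceptual: justifying the interchange of the infinite sum with the inner product in the adjoint computation (which follows from norm-convergence of the defining series, itself a consequence of Bessel's inequality and boundedness of $(\tau_i)$), and, to remain self-contained, establishing that a norm-limit of compact bilinear operators is compact. Terms with $\tau_n=0$ carry no singular-value content and may simply be discarded, so attention can be restricted throughout to the positive $\tau_n$.
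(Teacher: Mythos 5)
Your proposal is correct and follows essentially the same route as the paper: finite-rank partial sums $T_N$ converging in operator norm to give compactness, then the adjoint formulas $\phi^*_{H_1}(y)(z)=\sum_i\tau_i\langle y,y_i\rangle\langle z,z_i\rangle x_i$ and $\phi^*_{H_2}(x)(z)=\sum_i\tau_i\langle x,x_i\rangle\langle z,z_i\rangle y_i$ obtained by moving the sum through the inner product, from which the six defining relations of an ordered singular value follow by orthonormality. Your version is in fact slightly more careful than the paper's, which merely asserts the norm convergence $T_N\to T$ that you verify via Bessel's inequality and $(\tau_i)\in c_0$.
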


\begin{proof}
	
	For each $N \in \N$, if we define $T_N(x,y)  = \displaystyle \sum^N_{i=1}\tau_i \langle x,x_i\rangle \langle y,y_i\rangle z_i$, then each $T_N$ is compact, and $T_N$ converges in norm to $T$, giving the compactness of $T$. Now, we have that
	\[
	\begin{array}{rcl}
	\langle \phi_{H_1}(y)(x),z\rangle&=&\langle T(x,y),z\rangle=\langle \sum^{\infty}_{i=1}\tau_i\langle x,x_i\rangle \langle y,y_i\rangle z_i,z\rangle
	\\
	&=&\sum^{\infty}_{i=1}\tau_i \langle x,x_i\rangle \langle y,y_i\rangle \langle z,z_i\rangle\\
	&=&\langle x,\sum^{\infty}_{i=1}\tau_i \langle y,y_i\rangle \langle z,z_i\rangle x_i \rangle \,
	.\\
	\end{array}
	\]
	Thus, $\phi^*_{H_1}(y)(z)=\displaystyle\sum^{\infty}_{i=1}\tau_i \langle y,y_i\rangle \langle z,z_i\rangle x_i$. In a similar way we show that $\phi^*_{H_2}(x)(z)=\displaystyle
	\sum^{\infty}_{i=1}\tau_i \langle x,x_i\rangle \langle z,z_i\rangle y_i$. Then, we have
	\[
	\left \{ \begin{array}{rc}
	T(x_i,y_i)=\tau_i z_i &   \\
	\phi^*_{H_1}(y_i)(z_i)=\tau_i x_i &  \\
	\phi^*_{H_2}(x_i)(z_i)=\tau_i y_i &  \\
	T(x,y_i)=\tau_i \langle x,x_i\rangle z_i & \\
	T(x_i,y)=\tau_i \langle y,y_i\rangle z_i & \\
	\phi^*_{H_1}(y)(z_i)=\tau_i \langle y,y_i\rangle x_i \, .& \\
	\end{array}
	\right . 
	\]
	Therefore, $\tau_i$ is an ordered singular value of $T$ with associated ordered singular vectors $(x_i)$, $(y_i)$ e $(z_i)$.
\end{proof}

\begin{example}
	If $x = (x_1,x_2,x_3), y = (y_1,y_2,y_3) \in \Rset^3$, let $T \in Bil(\Rset^3)$ be given by 

	\begin{align*}
	T(x,y) &= (\frac{\sqrt{6}}{6}x_1 (y_1 + y_3) - \sqrt{2}x_2 y_2 + \frac{\sqrt{3}}{2}x_3 (y_1 + y_3),\\
	& \frac{\sqrt{6}}{6}x_1 (y_1 + y_3) + \sqrt{2}x_2 y_2 + \frac{\sqrt{3}}{2}x_3 (y_1 - y_3),\\
	& \frac{\sqrt{6}}{6}x_1 (y_1 + y_3) - \sqrt{3} x_3 (y_1 - y_3)) \,.
	\end{align*}
	
	We obtain the following singular values with their respective associated singular vectors:\\
	\centerline{$\tau_1=1$ with $x_1=(1, \, 0, \, 0)$, $\displaystyle
		y_1=(\frac{1}{\sqrt{2}}, \, 0,\, \frac{1}{\sqrt{2}})$ and
		$\displaystyle
		z_1=(\frac{1}{\sqrt{3}}, \, \frac{1}{\sqrt{3}},\, \frac{1}{\sqrt{3}})$}\\
	\centerline{$\tau_2=2$ with $x_2=(0, \, 1, \, 0)$, $\displaystyle
		y_2=(0, \, 1,\, 0)$ and  $\displaystyle
		z_2=(-\frac{1}{\sqrt{2}}, \, \frac{1}{\sqrt{2}},\, 0)$}\\
	\centerline{$\tau_3=3$ with $x_3=(0, \, 0, \, 1)$, $\displaystyle
		y_3=(\frac{2}{\sqrt{8}}, \, 0,\, -\frac{2}{\sqrt{8}})$ and
		$\displaystyle
		z_3=(\frac{1}{\sqrt{6}}, \, \frac{1}{\sqrt{6}},\, -\frac{2}{\sqrt{6}})$\,.}\\
	
	After some calculations it can be shown that $\tau_i$, $i = 1,2,3 $, are ordered regular values with their ordered singular vectors associated to $x_i, \, y_i, \, z_i$. It is easy to show that $ x_i \perp x_j$, $y_i \perp y_j$ and $z_i \perp z_j $, where $i, j = 1,2,3$ and $i \neq j$.
\end{example}

\section{The Schur representation}

In \cite{EDM} is defined the Schur representation of a compact bilinear operator $T \in Bil(H)$, and it is given new definitions, results and conditions in order to obtain a spectral theorem for bilinear operators on real Hilbert spaces.

\begin{definition}\label{shurdefini} Let $H$ be a separable Hilbert space and $T:H\times H \rightarrow H$ be a bilinear operator. $T$ has a Schur representation if there exists a sequence $(\lambda_n) \in c_0$ and an extended orthonormal sequence $(x_n)$ in $H$, such that
	\[
	T(x,y)=\sum^\infty_{n=1}\lambda_n\langle x,x_n\rangle \langle y,x_n\rangle x_n \,,
	\]
	for all $x, y \in H$.
\end{definition}
A Schur representation is monotone if $|\lambda_1| \geq |\lambda_2| \geq \ldots \geq 0$.

For each $x \in H$, and $T\in Bil(H)$, we define $T^s_x \in \mathfrak{L}(H)$ by $T^s_x(y) = T(y,x)$. In general $T_x \neq T^s_x$, but if $T$ is symmetric, then $T_x = T_x^s$, for all $x \in H$.

\begin{definition}\label{adj04}
	$T \in {\rm Bil}(H)$ is self-adjoint, if $T_x$ and $T_x^s$ are self-adjoint linear operators for all $x \in H$.
\end{definition}

In  \cite{JRR} it is shown that, if $L \in \mathfrak{L}(H)$ is compact, positive and self-adjoining, then the Schur and Schmidt representations of $L$ coincide, that is, the singular values are eigenvalues. We will show an analogous result for the bilinear case.

Let $T: H \times H \rightarrow H$ be compact. We know that $\tau_1 = \| T \|$ is a singular value of $T$, that is, there are vectors $x_1$, $y_1$ and $z_1 \in H$, such that,
\[
\left \{ \begin{array}{rc}
T(x_1,y_1)=\tau_1 z_1 &   \\
\phi^*_{H_1}(y_1)(z_1)=\tau_1 x_1 &  \\
\phi^*_{H_2}(x_1)(z_1)=\tau_1 y_1 &  \\
\end{array}
\right . \, .
\]
Assume that $T$ is self-adjoint, that is,\\
\centerline{$\langle T(x,y),z\rangle =\langle y,T(x,z)\rangle$}\\
\centerline{$\langle T(y,x),z\rangle = \langle y,T(z,x)\rangle$}\\
for all $x,\, y, \, z \in H$. In \cite{EDM} it is proved that in this situation we get that $T$ is symmetric. Then, \\
\centerline{$\langle \phi_{H_1}(y)(x),z\rangle = \langle T(x,y),z\rangle = \langle x,T(z,y)\rangle = \langle x,\phi_{H_1}(y)(z)\rangle$}\\
\centerline{$\langle \phi_{H_2}(x)(y),z\rangle = \langle T(x,y),z\rangle = \langle y,T(x,z)\rangle = \langle y,\phi_{H_2}(x)(z)\rangle$.}\\

Thus, $\phi^*_{H_1}(y)=\phi_{H_1}(y)$  and $\phi^*_{H_2}(x)=\phi_{H_2}(x)$, that is, $\phi_{H_1}(y)$ and $\phi_{H_2}(x)$ are self-adjoints. So, we have 
\begin{eqnarray}
T(x_1,y_1)&=&\tau_1 z_1 \nonumber   \\
T(z_1,y_1)&=&\tau_1 x_1  \label{propvalorsingularschmidt}  \\
T(x_1,z_1)&=&\tau_1 y_1 \nonumber \, .
\end{eqnarray}
Thus, if $T\in Bil(H)$ is compact and self-adjoint, then $\tau_1=\|T\|$ is a singular value with associated unitary singular vectors $x_1, \, y_1, \, z_1 \in H$, such that \eqref{propvalorsingularschmidt} is valid. If $\tau_1$ is an ordered singular value with ordered singular vectors $x_1$, $y_1$ and $z_1$ associated to $\tau_1$, we have:
\[
\begin{array}{rcl}
T(x,y_1)&=&\tau_1 \langle x,x_1\rangle z_1 \\
T(x_1,y)&=&\tau_1 \langle y,y_1\rangle z_1  \\
T(z_1,y)&=&\tau_1 \langle y,y_1\rangle x_1 \, ,\\
\end{array}
\]
for all $x, \, y \in H$. Note that \\
\centerline{$T(y_1,y_1)= \tau_1 \langle y_1,x_1\rangle z_1$}\\
\centerline{$T(x_1,x_1)= \tau_1 \langle x_1,y_1\rangle z_1\, ,$}
that is, $T(y_1,y_1)=T(x_1,x_1)$ and $T(z_1,z_1)=\tau_1 \langle _1,y_1\rangle x_1$.

Now, let us prove that if $T\in Bil(H)$ is compact, self-adjoint with a Schmidt representation, then it has a Schur representation of $T$, that is, the ordered singular values of $T$ that appear in his Schmidt representation are also ordered eigenvalues.
\begin{theorem}
	Let $T \in Bil(H)$ be compact, self-adjoining and not null. If $T$ has a Schmidt Representation according to Theorem \ref{existenciadeschmidt}, that is $T(x, y) = \displaystyle \sum^\infty_{j = 1} \tau_j \langle x, x_j\rangle \langle y, y_j\rangle z_j$, then $T$ has a Schur representation, that is
	\[
	T(x,y)=\displaystyle \sum^\infty_{j=1}\lambda_j \langle x,x_j\rangle \langle y,x_j\rangle x_j
	\]
	for all $x, y \in H$.
\end{theorem}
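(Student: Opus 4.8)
The plan is to exploit self-adjointness to show that, term by term, the three extended orthonormal families $(x_j)$, $(y_j)$, $(z_j)$ coincide up to sign, so that each summand of the Schmidt representation is already a Schur summand. First I would record the consequences of self-adjointness gathered in the discussion preceding the statement: $\phi^*_{H_1}(y)=\phi_{H_1}(y)$ and $\phi^*_{H_2}(x)=\phi_{H_2}(x)$ for all $x,y\in H$. Since the representation is the one produced by Theorem \ref{existenciadeschmidt}, for every $j$ the triple $(x_j,y_j,z_j)$ consists of ordered singular vectors for the ordered singular value $\tau_j$. Translating the singular-value relations $\phi^*_{H_1}(y_j)(z_j)=\tau_j x_j$ and $\phi^*_{H_2}(x_j)(z_j)=\tau_j y_j$ through the identifications $\phi^*=\phi$ and the definition $\phi_{H_1}(y)(x)=\phi_{H_2}(x)(y)=T(x,y)$ gives, for each $j$ with $\tau_j>0$,
\[
T(z_j,y_j)=\tau_j x_j,\qquad T(x_j,z_j)=\tau_j y_j,
\]
while the ordered relations supply $T(x,y_j)=\tau_j\langle x,x_j\rangle z_j$ and $T(x_j,y)=\tau_j\langle y,y_j\rangle z_j$ for all $x,y\in H$.

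The key step is then a pointwise identity. Fixing $j$ with $\tau_j>0$ and evaluating $T(x,y_j)=\tau_j\langle x,x_j\rangle z_j$ at $x=z_j$ yields $T(z_j,y_j)=\tau_j\langle z_j,x_j\rangle z_j$; comparing with $T(z_j,y_j)=\tau_j x_j$ and cancelling $\tau_j$ gives $x_j=\langle z_j,x_j\rangle z_j$, and since $\|x_j\|=\|z_j\|=1$ this forces $\langle z_j,x_j\rangle=\pm1$, i.e. $x_j=\pm z_j$. Symmetrically, evaluating $T(x_j,y)=\tau_j\langle y,y_j\rangle z_j$ at $y=z_j$ and comparing with $T(x_j,z_j)=\tau_j y_j$ gives $y_j=\pm z_j$. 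Hence there are signs $a_j,c_j\in\{-1,+1\}$ with $z_j=a_j x_j$ and $y_j=c_j x_j$, so the three families agree up to sign.

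Substituting these into the $j$-th Schmidt summand gives
\[
\tau_j\langle x,x_j\rangle\langle y,y_j\rangle z_j=\tau_j a_j c_j\langle x,x_j\rangle\langle y,x_j\rangle x_j=:\lambda_j\langle x,x_j\rangle\langle y,x_j\rangle x_j,
\]
with $\lambda_j=a_j c_j\tau_j=\tau_j\langle z_j,y_j\rangle$ and $|\lambda_j|=\tau_j$; for indices with $\tau_j=0$ both sides vanish, so the rewriting is legitimate term by term. Summing the (convergent) series then produces $T(x,y)=\sum_{j=1}^\infty\lambda_j\langle x,x_j\rangle\langle y,x_j\rangle x_j$, and since $(x_j)$ is extended orthonormal in $H$ and $(\lambda_j)\in c_0$ (because $|\lambda_j|=\tau_j$ and $(\tau_j)\in c_0$), this is a Schur representation in the sense of Definition \ref{shurdefini}. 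I expect the only delicate point to be the bookkeeping in the key step: one must evaluate the correct ordered relation at the correct singular vector ($x=z_j$ in the first, $y=z_j$ in the second) so that each right-hand side collapses to a scalar multiple of $z_j$ matchable against $\tau_j x_j$ and $\tau_j y_j$; everything beyond that is sign-tracking and a comparison of two convergent series.
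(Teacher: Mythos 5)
Your proposal is correct and follows essentially the same route as the paper: both arguments combine the self-adjointness identities $\phi^*_{H_1}(y)=\phi_{H_1}(y)$, $\phi^*_{H_2}(x)=\phi_{H_2}(x)$ with the ordered singular-vector relations to show $x_j=\pm z_j$ and $y_j=\pm z_j$, and then rewrite the Schmidt series term by term with $\lambda_j=\pm\tau_j$. Your version is marginally cleaner --- you evaluate the ordered relations at $z_j$ directly and track the signs uniformly, whereas the paper expands $T(z_i,y_i)$, $T(x_i,z_i)$ through the full series and runs an explicit case analysis --- but the underlying idea is identical.
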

\begin{proof} Let $T(x,y)=\displaystyle \sum^\infty_{j=1}	\tau_j \langle x,x_j\rangle \langle y,y_j\rangle z_j$ be a Schmidt representation of $T$ according to Theorem \ref{existenciadeschmidt}. We have \\
	\centerline{$T(x_i,y_i)=\tau_i z_i  \hspace{1cm}\mbox{ \,
		}\hspace{1cm}T(x,y_i) = \tau_i \langle x,x_i\rangle z_i$}\\
	\centerline{$T(z_i,y_i)=\tau_i x_i  \hspace{1cm}\mbox{ \,
		}\hspace{1cm}T(x_i,y)=\tau_i \langle y,y_i\rangle z_i$}\\
	\centerline{$T(x_i,z_i)=\tau_i y_i  \hspace{1cm}\mbox{ \,
		}\hspace{1cm}T(z_i,y)=\tau_i \langle y,y_i \rangle x_i$}\\
	for all $x, \, y \in H$.
	
	We have that $T(y_i,x_i)=T(x_i,y_i)=\tau_i z_i$, so\\
	\centerline{$T(y_i,x_i)=\displaystyle \sum^\infty_{j=1}
		\tau_j \langle y_i,x_j\rangle \langle x_i,y_j\rangle z_j = \tau_i z_i$\, .}\\
	Then,\\
	\centerline{$\tau_i \langle y_i,x_i\rangle \langle x_i,y_i\rangle \langle z_i,z_i\rangle = \tau_i \langle z_i,z_i\rangle$\,.}\\
	Since $\tau_i \neq 0$ and $z_i$ is unitary, we get 
	\begin{equation}\label{1}
	\langle y_i,x_i\rangle = \pm 1\, .
	\end{equation}
	We also have that $T(z_i,y_i)=T(y_i,z_i)=\tau_i x_i$, so\\
	\centerline{$T(z_i,y_i)=\displaystyle \sum^\infty_{j=1}
		\tau_j \langle z_i,x_j\rangle \langle y_i,y_j\rangle z_j = \tau_i x_i$}\\
	\centerline{$T(y_i,z_i)=\displaystyle \sum^\infty_{j=1}
		\tau_j \langle y_i,x_j\rangle \langle z_i,y_j\rangle z_j = \tau_i x_i$\, .}\\
	Thus,\\
	\centerline{$\tau_i \langle z_i,x_i\rangle \langle y_i,y_i\rangle z_i = \tau_i x_i$}\\
	\centerline{$\tau_i \langle y_i,x_i\rangle \langle z_i,y_i\rangle \langle z_i,z_i\rangle = \tau_i \langle x_i,z_i>\rangle$\, .}\\
	Since $\tau_i > 0$ and $y_i$ is unitary, 
	\begin{eqnarray}
	\langle z_i,x_i\rangle z_i&=& x_i\label{2}\\
	\langle y_i,x_i\rangle \langle z_i,y_i\rangle &=& \langle x_i,z_i\rangle\, .\label{3}
	\end{eqnarray}
	We also have that $T(x_i,z_i)=T(z_i,x_i)=\tau_i y_i$, so \vspace{0.3cm}\\
	\centerline{$T(x_i,z_i)=\displaystyle \sum^\infty_{j=1}
		\tau_j \langle x_i,x_j\rangle \langle z_i,y_j\rangle z_j=\tau_i y_i$}\\
	\centerline{$T(z_i,x_i)=\displaystyle \sum^\infty_{j=1}
		\tau_j \langle z_i,x_j\rangle \langle x_i,y_j\rangle z_j = \tau_i y_i$\, .}\\
	This implies,\\
	\centerline{$\tau_i \langle z_i,y_i\rangle z_i = \tau_i y_i$}\\
	\centerline{$\tau_i \langle z_i,x_i\rangle \langle x_i,y_i\rangle \langle z_i,z_i\rangle = \tau_i \langle y_i,z_i\rangle$\, .}\\
	
	Since $\tau_i > 0$ and $z_i$ is unitary, we get
	\begin{eqnarray}
	\langle z_i,y_i\rangle z_i&=& y_i\label{4}\\
	\langle z_i,x_i\rangle \langle x_i,y_i\rangle &=& \langle y_i,z_i\rangle\, .\label{5}
	\end{eqnarray}
	Provided that $\tau_i$ is an ordered singular value, we also have:
	$$T(y_i,y_i)=\tau_i \langle y_i,x_i\rangle z_i$$
	$$T(x_i,x_i)=\tau_i \langle x_i,y_i\rangle z_i$$
	$$T(z_i,z_i)=\tau_i \langle z_i,y_i\rangle x_i.$$
	
	By \eqref{1}, $\langle y_i,x_i\rangle =\pm 1 $. Let $\langle y_i,x_i\rangle = 1$. Then, by \eqref{3}, $\langle z_i,y_i\rangle = \langle x_i,z_i\rangle$. From \eqref{2} and \eqref{4}, we get that $y_i=x_i$.
	
	By \eqref{2}, we have $\langle z_i,x_i\rangle z_i = x_i$, so $\langle z_i,x_i\rangle = \pm 1$. Let $\langle z_i,x_i\rangle = 1$. Since
	\begin{eqnarray}
	T(z_i,y_i)&=&\tau_i x_i\label{6}\\
	T(x,y_i)&=&\tau_i \langle x,x_i\rangle z_i\, ,\label{7}
	\end{eqnarray}
	for all $x\in H$, by \eqref{7} we have \\
	\centerline{$T(z_i,y_i)=\tau_i \langle z_i,x_i\rangle z_i = \tau_i z_i.$}\\
	For $\tau_i > 0$, by \eqref{6} we have $z_i=x_i$. If we put $\lambda_i=\tau_i$, $i\in \Nset$, then
	\[
	T(x,y)=\displaystyle \sum^\infty_{i=1}
	\lambda_i \langle x,x_i\rangle \langle y,x_i\rangle x_i\, ,
	\]		
	is a Schur representation of $T$. If $\langle z_i,x_i\rangle = -1$, by \eqref{7} we have $T(z_i,y_i)=-\tau_i z_i$. Then, by \eqref{6} we get $z_i=-x_i$. If we put $\lambda_i = -\tau_i$, $i\in \Nset$, then 
	\[
	T(x,y)=\displaystyle \sum^\infty_{i=1}
	\lambda_i \langle x,x_i\rangle \langle y,x_i\rangle x_i\, ,
	\]		
	is a Schur representation of $T$. 
	
	Suppose now that $\langle y_i,x_i\rangle = -1$. Then, by \eqref{3} we have $\langle z_i,y_i\rangle = -\langle z_i,x_i\rangle$. Thus, by \eqref{2} and \eqref{4} we get that $y_i=-x_i$. If $\langle z_i,x_i\rangle = 1$, we have $z_i = x_i$. If we put $\lambda_i = -\tau_i$, $i\in \Nset$, then 
	\[
	T(x,y)=\displaystyle \sum^\infty_{i=1}
	\lambda_i \langle x,x_i\rangle \langle y,x_i\rangle x_i\, ,
	\]
	is a Schur representation of $T$.
	
	If $\langle z_i,x_i\rangle = -1$, then $z_i=-x_i$. If we put $\lambda_i=\tau_i$, $i\in \Nset$, we have 
	\[T(x,y)=\displaystyle \sum^\infty_{i=1}
	\lambda_i \langle x,x_i\rangle \langle y,x_i\rangle x_i\, ,
	\]		
	is a Schur representation of $T$.
\end{proof}

\section{Schmidt representation and Hilbert-Schmidt bilinear operators}

We know that if $L: H_1 \rightarrow H_2$ is a linear compact operator, then $L$ has a Schmidt representation, that is, $L(x)=\displaystyle \sum_{n=1}^\infty \tau_n<x,x_n>y_n$. In \cite{MM} we have the following definition:

\begin{definition}
	Let $H_1$, $H_2$ and $K$ be separable Hilbert spaces. An operador $T\in Bil(H_1\times H_2,K)$ is a bilinear Hilbert-Schmidt operator if the series 
	\[
	||T||_2=\left(\sum_{m=1}^\infty\sum_{n=1}^\infty ||T(u_m,v_n)||_K^2\right)^{1/2}
	\]
	converges for some orthonormal basis $(u_m)$ of $H_1$ and $(v_n)$ de $H_2$.
\end{definition}

The class of all bilinear Hilbert-Schmidt operators $T\in Bil(H_1\times H_2,K)$ is denoted by $HS(H_1\times H_2,K)$. When $K$ is the scalar field we write $HS(H_1\times H_2)$.\\

It can be proved that $HS(H_1\times H_2,K)$ is a Hilbert space with the norm $\| \cdot \|_{HS}$ defined from the inner product \\
\centerline{ $\displaystyle \langle T,S \rangle =\sum^\infty_{i,j=1}
	\langle T(x_i,y_j),S(x_i,y_j)\rangle  \,
	.$}\\

The next theorems from \cite{MM} give us that Definition 3.1 is independent of the orthonormal bases in $H_1$ and $H_2$. 

\begin{theorem}\label{prophilbertbase}
	Let  $T\in HS(H_1\times H_2,K)$, then $||T||_2$ is independent of choice of the orthonormal bases. 
\end{theorem}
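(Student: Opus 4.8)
The plan is to reduce the defining double sum to a triple sum by inserting an orthonormal basis of $K$, and then to strip off the dependence on each of the two bases of $H_1$ and $H_2$ one at a time, using Parseval's identity together with the adjoint relations $\langle T(x,y),z\rangle=\langle x,\phi^*_{H_1}(y)(z)\rangle=\langle y,\phi^*_{H_2}(x)(z)\rangle$ recorded in Section~4. A key observation underlying every manipulation is that all summands are nonnegative; by Tonelli's theorem for series the resulting multiple sums are well defined in $[0,\infty]$ and may be evaluated in any order, which is exactly what legitimizes the interchanges below even before finiteness is known.

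First I would fix an auxiliary orthonormal basis $(w_k)$ of $K$. For arbitrary orthonormal bases $(u_m)$ of $H_1$ and $(v_n)$ of $H_2$, Parseval's identity in $K$ gives $\|T(u_m,v_n)\|_K^2=\sum_k|\langle T(u_m,v_n),w_k\rangle|^2$ for each $m,n$, so that
\[
\sum_{m,n}\|T(u_m,v_n)\|_K^2=\sum_{m,n,k}|\langle T(u_m,v_n),w_k\rangle|^2 .
\]
The right-hand side is a triple sum of nonnegative terms, hence its value is unchanged under any reordering.

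Next I would remove the dependence on $(u_m)$. From the definition of the adjoint, for each fixed $(n,k)$ the map $x\mapsto\langle T(x,v_n),w_k\rangle$ equals $x\mapsto\langle x,\phi^*_{H_1}(v_n)(w_k)\rangle$; summing over $m$ by Parseval in $H_1$ yields $\sum_m|\langle T(u_m,v_n),w_k\rangle|^2=\|\phi^*_{H_1}(v_n)(w_k)\|_{H_1}^2$, and therefore
\[
\sum_{m,n,k}|\langle T(u_m,v_n),w_k\rangle|^2=\sum_{n,k}\|\phi^*_{H_1}(v_n)(w_k)\|_{H_1}^2 ,
\]
an expression in which $(u_m)$ no longer appears. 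Symmetrically, using $\langle T(x,y),z\rangle=\langle y,\phi^*_{H_2}(x)(z)\rangle$ and summing over $n$ first gives
\[
\sum_{m,n,k}|\langle T(u_m,v_n),w_k\rangle|^2=\sum_{m,k}\|\phi^*_{H_2}(u_m)(w_k)\|_{H_2}^2 ,
\]
an expression free of $(v_n)$.

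Finally I would assemble these identities. Given two pairs of orthonormal bases $((u_m),(v_n))$ and $((u'_p),(v'_q))$, I fix the same auxiliary basis $(w_k)$: the second paragraph converts each double sum into the associated triple sum, the first displayed identity of the third paragraph lets me replace $(u_m)$ by $(u'_p)$ without changing the value, the symmetric identity then lets me replace $(v_n)$ by $(v'_q)$, and one further application of Parseval in $K$ returns the triple sum to $\sum_{p,q}\|T(u'_p,v'_q)\|_K^2$. The two double sums therefore coincide. Since $T\in HS(H_1\times H_2,K)$ means the sum is finite for at least one pair of bases and all equalities hold in $[0,\infty]$, finiteness propagates to every pair, so $\|T\|_2$ is well defined and basis-independent. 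The one step I would treat most carefully is the justification of these reorderings of the infinite triple sum; this is precisely where nonnegativity of the terms is indispensable, and it is also what removes any need to assume in advance that the sum converges for the second pair of bases.
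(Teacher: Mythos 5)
Your argument is correct and complete: inserting a fixed orthonormal basis of $K$, converting each double sum to a nonnegative triple sum, and using the Riesz/adjoint identities $\langle T(u_m,v_n),w_k\rangle=\langle u_m,\phi^*_{H_1}(v_n)(w_k)\rangle=\langle v_n,\phi^*_{H_2}(u_m)(w_k)\rangle$ together with Parseval and Tonelli is exactly the standard route, and your two intermediate expressions correctly eliminate the $H_1$- and $H_2$-bases one at a time. Note that the paper itself states this theorem without proof, citing Matos \cite{MM}, so there is no in-text argument to compare against; your proof stands on its own, and the only point worth making explicit is that $\phi_{H_1}(v_n)$ and $\phi_{H_2}(u_m)$ are bounded (hence admit adjoints) because $T\in Bil(H_1\times H_2,K)$ is bounded by definition.
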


\begin{proposition}
	Let $T \in HS(H_1\times H_2,K)$, then 
	\[
	||T||_{Bil(H_1\times H_2,K)} \leq ||T||_{HS(H_1\times H_2,K)} \,.
	\]
\end{proposition}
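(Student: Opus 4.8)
The plan is to bound $\|T(x,y)\|_K$ uniformly over unit vectors by the Hilbert-Schmidt norm and then pass to the supremum. The crucial tool is Theorem \ref{prophilbertbase}, which guarantees that $\|T\|_2$ is independent of the particular orthonormal bases used to compute it; this freedom is exactly what lets me align the bases with the vectors at which I want to evaluate $T$.

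First I would fix arbitrary unit vectors $x \in H_1$ and $y \in H_2$. Since $H_1$ and $H_2$ are separable Hilbert spaces, the singleton $\{x\}$ is an orthonormal set and can be completed to an orthonormal basis $(u_m)_m$ of $H_1$ with $u_1 = x$; likewise $\{y\}$ completes to an orthonormal basis $(v_n)_n$ of $H_2$ with $v_1 = y$. Because $T \in HS(H_1\times H_2,K)$, the double series defining $\|T\|_2$ converges, and by Theorem \ref{prophilbertbase} its value is the same for these adapted bases as for any other.

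Next, evaluating the Hilbert-Schmidt norm in these particular bases, I would observe that $\|T(u_1,v_1)\|_K^2 = \|T(x,y)\|_K^2$ is a single nonnegative summand of the double series, so
\[
\|T(x,y)\|_K^2 = \|T(u_1,v_1)\|_K^2 \leq \sum_{m=1}^\infty \sum_{n=1}^\infty \|T(u_m,v_n)\|_K^2 = \|T\|_{HS}^2 .
\]
Taking square roots yields $\|T(x,y)\|_K \leq \|T\|_{HS}$ for every pair of unit vectors $x, y$.

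Finally, taking the supremum over all $x$ with $\|x\|_{H_1} = 1$ and all $y$ with $\|y\|_{H_2} = 1$, and invoking the Proposition at the start of Section 2 (which identifies $\|T\|_{Bil}$ with precisely this supremum), I obtain $\|T\|_{Bil(H_1\times H_2,K)} \leq \|T\|_{HS(H_1\times H_2,K)}$. There is no genuine obstacle in this argument; the only point requiring care is the legitimacy of computing the Hilbert-Schmidt norm with bases adapted to $x$ and $y$, which is exactly the content of the basis-independence theorem and is what makes the single-term lower bound on the series meaningful.
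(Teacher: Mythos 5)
Your argument is correct: the paper itself states this proposition without proof (it is quoted from \cite{MM}), and your route --- completing $\{x\}$ and $\{y\}$ to orthonormal bases, invoking the basis-independence of $\|T\|_2$ from Theorem \ref{prophilbertbase}, bounding the single nonnegative term $\|T(u_1,v_1)\|_K^2$ by the full double series, and then taking the supremum over unit vectors --- is the standard and correct way to establish it. No gaps.
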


\begin{theorem}
	Let $T \in HS(H_1\times H_2,K)$, then $T$ is compact.
\end{theorem}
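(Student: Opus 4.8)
The plan is to realize $T$ as the operator-norm limit of a sequence of finite-rank bilinear operators and then invoke the standard fact that such limits of compact operators remain compact. The bridge between the two relevant norms is the preceding Proposition, which gives $\|S\|_{Bil(H_1\times H_2,K)}\le \|S\|_{HS(H_1\times H_2,K)}$ for every $S\in HS(H_1\times H_2,K)$; this is what lets me upgrade convergence in the Hilbert--Schmidt norm (which is directly controlled by the defining series) into convergence in the operator norm (which is what a compactness argument needs).

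First I would fix orthonormal bases $(u_m)$ of $H_1$ and $(v_n)$ of $H_2$. Writing $x=\sum_m \langle x,u_m\rangle u_m$ and $y=\sum_n \langle y,v_n\rangle v_n$ and using bilinearity together with the continuity of $T$, one obtains the expansion $T(x,y)=\sum_{m,n}\langle x,u_m\rangle\langle y,v_n\rangle\, T(u_m,v_n)$. For each $N\in\Nset$ I define the truncation
\[
T_N(x,y)=\sum_{m=1}^N\sum_{n=1}^N \langle x,u_m\rangle\langle y,v_n\rangle\, T(u_m,v_n)\,.
\]
Each $T_N$ belongs to $Bil(H_1\times H_2,K)$ and its range lies in the finite-dimensional subspace spanned by $\{T(u_m,v_n):1\le m,n\le N\}$, so $T_N(U_{H_1}\times U_{H_2})$ is a bounded subset of a finite-dimensional space, hence precompact; thus each $T_N$ is compact. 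Since $(T-T_N)(u_m,v_n)=T(u_m,v_n)$ whenever $m>N$ or $n>N$ and vanishes otherwise, Theorem \ref{prophilbertbase} yields
\[
\|T-T_N\|_{HS}^2=\sum_{\max(m,n)>N}\|T(u_m,v_n)\|_K^2\,,
\]
which is the tail of the convergent series defining $\|T\|_2$ and therefore tends to $0$ as $N\to\infty$. By the Proposition, $\|T-T_N\|_{Bil}\le \|T-T_N\|_{HS}\to 0$, so $T_N\to T$ in the operator norm.

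It then remains to show that an operator-norm limit of compact bilinear operators is itself compact, which is the only genuinely structural step. Given $\varepsilon>0$, I choose $N$ with $\|T-T_N\|_{Bil}<\varepsilon/2$. Because $T_N$ is compact, the set $T_N(U_{H_1}\times U_{H_2})$ is totally bounded and admits a finite $\varepsilon/2$-net; this same finite set is an $\varepsilon$-net for $T(U_{H_1}\times U_{H_2})$, since $\|T(x,y)-T_N(x,y)\|_K\le \|T-T_N\|_{Bil}<\varepsilon/2$ uniformly for $(x,y)\in U_{H_1}\times U_{H_2}$. Hence $T(U_{H_1}\times U_{H_2})$ is totally bounded, and as $K$ is complete its closure is compact, i.e. $T$ is compact. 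I expect the main (though mild) obstacle to be precisely this closedness argument: unlike the finite-rank construction and the norm estimate, it is not quoted from an earlier result and must be carried out by hand via total boundedness. The bilinear structure causes no extra trouble here, because the key estimate is uniform over the whole product $U_{H_1}\times U_{H_2}$ of unit balls.
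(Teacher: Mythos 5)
Your proof is correct. Note that the paper itself gives no argument for this theorem: it is one of the results imported from \cite{MM} without proof, so there is nothing to compare against. Your route is the standard one --- truncate against orthonormal bases to get finite-rank (hence compact) bilinear operators, control the tail in the Hilbert--Schmidt norm, pass to the operator norm via the preceding Proposition, and finish with the total-boundedness argument showing that an operator-norm limit of compact bilinear operators is compact --- and every step, including the computation $(T-T_N)(u_m,v_n)=0$ for $m,n\le N$ and $=T(u_m,v_n)$ otherwise, checks out.
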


Now, we get the following result. 

\begin{theorem} Let $H_1$, $H_2$ and $K$ Hilbert spaces, and let $T\in Bil(H_1\times
	H_2,K)$ be compact and not null. Suppose that $T$ has a Schmidt representation as in the Theorem \ref{existenciadeschmidt}. Let $(\tau_n)$ be the sequence of singular values of $T$ given in the Schmidt representation of $T$. If $(\tau_n) \in \ell^2$, then $T$ is a 	Hilbert-Schmidt operator.
\end{theorem}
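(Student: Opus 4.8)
The plan is to test the defining series of the Hilbert--Schmidt norm against one conveniently chosen pair of orthonormal bases. Since the definition only requires that $\|T\|_2$ converge for \emph{some} orthonormal basis $(u_m)$ of $H_1$ and $(v_n)$ of $H_2$ (and by Theorem \ref{prophilbertbase} the value is then independent of the choice), it suffices to exhibit a single pair for which the double sum is finite. First I would discard the null terms of the extended sequences and retain only the nonzero vectors among the $(x_i)$, $(y_i)$, $(z_i)$; by the definition of an extended orthonormal sequence these are unit vectors that are mutually orthogonal, so they form genuine orthonormal families (and any null term carries $\tau_i=0$, contributing nothing). Using separability, I would then extend the orthonormal set $\{x_i\}$ to an orthonormal basis $(u_m)$ of $H_1$ and the orthonormal set $\{y_i\}$ to an orthonormal basis $(v_n)$ of $H_2$, so that each $x_i$ is one of the $u_m$ and each $y_i$ is one of the $v_n$.

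Next, using the Schmidt representation I would evaluate, for each basis pair,
\[
T(u_m,v_n)=\sum_{i=1}^\infty \tau_i \langle u_m,x_i\rangle \langle v_n,y_i\rangle z_i .
\]
Because each $x_i$ coincides with a basis vector $u_m$ (and each extension vector is orthogonal to every $x_i$), the factor $\langle u_m,x_i\rangle$ is a Kronecker delta: for fixed $m$ it is nonzero for at most one index $i$, so the series collapses to a single term. The analogous statement holds for $\langle v_n,y_i\rangle$. Consequently $T(u_m,v_n)$ is nonzero only when $(u_m,v_n)=(x_i,y_i)$ for one common index $i$, in which case $T(u_m,v_n)=\tau_i z_i$, of norm $|\tau_i|$ since $\|z_i\|_K=1$.

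Finally I would sum the squared norms over all $(m,n)$. The preceding step shows that exactly one pair $(m,n)$ contributes $\tau_i^2$ for each $i$, while every other pair contributes $0$, whence
\[
\|T\|_2^2=\sum_{m=1}^\infty\sum_{n=1}^\infty \|T(u_m,v_n)\|_K^2=\sum_{i=1}^\infty \tau_i^2=\|(\tau_n)\|_{\ell^2}^2<\infty
\]
by hypothesis, so the series converges and $T\in HS(H_1\times H_2,K)$. The one point needing care is the bookkeeping of the two independent re-indexings that identify each $x_i$ with some $u_m$ and each $y_i$ with some $v_n$: one must verify that a term of the double sum survives precisely when the \emph{same} index $i$ realizes both identifications, so that no cross terms $\tau_i\tau_j$ with $i\neq j$ appear. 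This is exactly what the orthonormality of $(x_i)$ and of $(y_i)$ guarantees, and it is where the argument should be written out rather than waved through; once it is settled, there is no convergence subtlety, since for each fixed $(m,n)$ the inner sum over $i$ is already a single term.
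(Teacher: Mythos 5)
Your proof is correct, but it takes a genuinely different route from the paper's. The paper works with an \emph{arbitrary} pair of orthonormal bases $(u_i)$ of $H_1$ and $(v_j)$ of $H_2$: it expands $T(u_i,v_j)$ through the Schmidt representation, uses the orthonormality of $(z_s)$ to obtain
\[
\|T(u_i,v_j)\|_K^2=\sum_{s=1}^\infty |\tau_s|^2\,|\langle u_i,x_s\rangle|^2\,|\langle v_j,y_s\rangle|^2,
\]
interchanges the order of summation (legitimate by Tonelli, all terms being nonnegative), and then applies Parseval's identity $\sum_{i}|\langle u_i,x_s\rangle|^2=\|x_s\|^2=1$ and its analogue for $y_s$ to land on $\sum_s|\tau_s|^2$. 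You instead choose bases adapted to the representation, extending the nonzero $x_i$'s and $y_i$'s to orthonormal bases, so the double sum collapses term by term and no Parseval identity or interchange of summation is needed; the price is that you must lean on the ``some basis'' clause of the definition together with Theorem \ref{prophilbertbase} for well-definedness of $\|T\|_2$. The bookkeeping point you flag is handled correctly: distinct nonzero $x_i$'s are mutually orthogonal unit vectors, hence distinct basis elements, so for fixed $m$ the factor $\langle u_m,x_i\rangle$ is nonzero for at most one $i$, and a term survives only when that same $i$ also matches $v_n=y_i$; no cross terms arise. Both arguments yield $\|T\|_2^2=\sum_s\tau_s^2$; the paper's version has the minor advantage of verifying convergence for every basis directly, while yours is shorter and sidesteps the Fubini step.
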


\begin{proof} Let $T(x,y)=\displaystyle\sum^{\infty}_{i=1}\tau_i \langle x,x_i\rangle \langle y,y_i\rangle z_i$ and $(\tau_i) \in \ell^2$, that is, $\displaystyle \sum_{i=1}^\infty |\tau_i|^2 < \infty$. Suppose that $(u_i)$ and $(v_i)$, $i\in \Nset$, are orthonormal bases of $H_1$ and $H_2$,
	respectively. We want to show that $\displaystyle	\sum_{i,j=1}^\infty \|T(u_i,v_j) \|_{_{K}}^2 < \infty$. We have 
	\[
	\|x_s\|_{_{H_1}}=\displaystyle \sum_{i=1}^\infty |\langle u_i,x_s\rangle |^2=1\, \mbox{
		e } \|y_s\|_{_{H_2}}=\displaystyle \sum_{j=1}^\infty
	|\langle v_j,y_s\rangle |^2=1.
	\]
	Then, 
	\begin{eqnarray*}
		\sum_{i,j=1}^\infty \|T(u_i,v_j) \|_{_{K}}^2 &=&\sum_{i,j=1}^\infty
		\left (\|\sum^{\infty}_{s=1}\tau_s \langle u_i,x_s\rangle \langle v_j,y_s\rangle z_s \|_{_{K}}^2\right)\\
		&=& \sum_{i,j=1}^\infty \left
		(\sum^{\infty}_{s=1}|\tau_s|^2|\langle u_i,x_s\rangle |^2|\langle v_j,y_s\rangle |^2\right)\\
		&=&\sum_{s=1}^\infty \left (\sum^{\infty}_{i=1}\left
		(\sum^{\infty}_{j=1}|\tau_s|^2|\langle u_i,x_s\rangle |^2|\langle v_j,y_s\rangle |^2\right)\right)\\
		&=&\sum_{s=1}^\infty \left (\sum^{\infty}_{i=1}|\tau_s|^2|\langle u_i,x_s\rangle |^2\right)\\
		&=&\sum_{s=1}^\infty |\tau_s|^2 < \infty \, ,
	\end{eqnarray*}
	as desired.
\end{proof}

\vspace{0.5cm}

\noindent

Eduardo Brandani da Silva  \\
Universidade Estadual de Maring\'a--UEM \\
Departamento de Matem\'atica \\
Av. Colombo 5790 \\
Maring\'a - PR  \\
Brazil - 870300-110

\vspace{1 mm}

\noindent E-mail: \,{\tt ebsilva@wnet.com.br}

\vspace{0.3cm}

Dicesar Lass Fernandez  \\
Universidade Estadual de Campinas - Unicamp \\
Instituto de Matem\'atica \\
Campinas - SP  \\
Brazil - 13083-859

\vspace{1 mm}

\noindent
E-mail: \,{\tt dicesar@ime.unicamp.br} \\

\noindent

Marcus Vin\'icius de Andrade Neves\\
Department of Mathematics, Federal University of Mato Grosso\\
Av. dos Estudantes 5055 \\
78735-901 Rondon\'opolis, MT, Brazil

\vspace{1 mm}

\noindent E-mail: \,{\tt marcusmatematico@hotmail.com}

\end{document}